\date{}
\newtheorem{theorem}{Theorem}[section]
\newtheorem{lemma}[theorem]{Lemma}
\newtheorem{corollary}[theorem]{Corollary}
\newtheorem{proposition}[theorem]{Proposition}
\newtheorem{remark}[theorem]{Remark}
\numberwithin{equation}{section}
 \title{Moment measures and stability for Gaussian inequalities}
\author{Alexander~V.~Kolesnikov}
\address{Higher School of Economics, Moscow,  Russia}
\email{Sascha77@mail.ru}
\thanks{This research has been supported by
the Russian Science Foundation Grant N 17-11-01058 (at Moscow Lomonosov State University)
}
\author{Egor~D.~Kosov}
\address{Department of Mechanics and Mathematics, Moscow State University, 119991 Moscow, Russia;
Higher School of Economics, Moscow,  Russia}
\email{ked\_2006@mail.ru}
\thanks{The second author is a Young
Russian Mathematics award winner and would like to thank its sponsors and jury.}
\begin{document}

\maketitle

  \begin{abstract}
  Let $\gamma$ be the standard Gaussian measure on $\mathbb{R}^n$
  and let $\mathcal{P}_{\gamma}$  be the space of probability measures
  that are absolutely continuous   with respect to $\gamma$.
  We study lower bounds for the functional
  $\mathcal{F}_{\gamma}(\mu) = {\rm  Ent}(\mu) - \frac{1}{2} W^2_2(\mu, \nu)$,
 where $\mu \in \mathcal{P}_{\gamma}, \nu  \in \mathcal{P}_{\gamma}$, ${\rm  Ent}(\mu) = \int \log\bigl( \frac{\mu}{\gamma}\bigr) d \mu$ is the relative  Gaussian entropy, and $W_2$
 is the quadratic  Kantorovich distance.
 The minimizers of $\mathcal{F}_{\gamma}$ are solutions to a dimension-free Gaussian analog
 of the (real)  K{\"a}hler--Einstein equation.
 We show that  $\mathcal{F}_{\gamma}(\mu) $ is bounded from below under the assumption
 that the Gaussian Fisher information of $\nu$  is finite and
 prove a~priori estimates for the minimizers.
 Our approach relies on certain stability estimates
 for  the Gaussian log-Sobolev and Talagrand transportation inequalities.
  \end{abstract}

\noindent
Keywords: Gaussian inequalities, optimal transportation, K\"ahler-Einstein equation, moment measure

\section{Introduction}
Given a  probability measure $\nu = \varrho dx$ one can try to find
a log-concave measure $\mu = e^{-\Phi} dx$ (i.e., $\Phi$ is a convex function)
satisfying the following remarkable property:
$\nu$~is the image of $\mu$ under the mapping $T$
generated by the logarithmic gradient of~$\mu$:
$$
T(x) = \nabla \Phi(x),
\quad
\nu = \mu \circ T^{-1}.
$$
Following the terminology from \cite{CK},
 we say that  $\nu$ is a moment measure if such a function $\Phi$ exists.

There are many motivations to study moment measures. The associated equation on $\Phi$
$$
e^{-\Phi} = \varrho(\nabla \Phi) \det D^2 \Phi
$$
is a non-linear elliptic PDE of the Monge--Amp\`ere type.
After a suitable complexification it turns out to be
a particular case of the complex Monge--Amp\`ere equation.
The case where $\nu$ is Lebesgue measure
on a polytope with rational coordinates is of special interest
in differential and algebraic geometry because of its relation to the theory of toric varietes.
First results on the well-posedness of this equation have been established
in a series of geometric papers (see \cite{WangZhu}, \cite{BB},  \cite{CK},
 and the references therein).
The most general result on existence of the moment measure has been obtained in \cite{CK}
under fairly general assumptions. It   is known that
$\Phi$ is a maximum point of the following functional:
\begin{equation}
\label{prekopa}
J(f) = \log \int e^{-f^*} dx  - \int f d \nu,
\end{equation}
where $f^*$ is the Legendre transform of $f$.
This functional  has  deep relations to the classical Brunn--Minkowski theory.
In particular, $J$ is concave under the usual addition and this fact
is a particular form of the famous Pr\'ekopa--Leindler inequality.
The measure $\mu$ is unique up to translations. To determine it uniquely
we always assume that the barycenter (mean) of $\mu$ equals zero: $\int x d \mu=0$.

An alternative  viewpoint was suggested in \cite{S}, where another natural
functional was proposed.
It was shown in \cite{S} that $\rho = e^{-\Phi} $ gives a minimum to the functional
\begin{equation}
\label{KEfunc}
\mathcal{F}(\rho) = -\frac{1}{2} W^2_2(\nu, \rho dx)
+ \frac{1}{2} \int x^2 \rho \ dx + \int \rho \log \rho dx.
 \end{equation}
 Here $W_2$ is the Kantorovich distance for the cost function $c(x,y) = |x-y|^2$.
Unlike the approach of \cite{CK}, the moment measure problem is viewed
here as a problem on the space of probability measures
equipped with the quadratic Kantorovich distance. We emphasize
that the mass transportation problem is very relevant here.
Indeed, the mapping $x \to \nabla \Phi(x)$ is the optimal
transportation taking $e^{-\Phi} dx$ to $\nu$.
However, since $\mu$ depends on $\Phi$ explicitly,
there is no simple way to find $\Phi$ as a solution to a Monge--Kantorovich problem.

Following the idea from \cite{S}, we are looking for
the minima of the Gaussian analog of (\ref{KEfunc})
$$
\mathcal{F}_{\gamma}(\rho) = -\frac{1}{2} W^2_2(g \cdot \gamma, \rho \cdot \gamma)  + {\rm Ent} \rho,
$$
where $
\gamma= \frac{1}{(\sqrt{2 \pi})^n} e^{-\frac{|x|^2}{2}} \ dx,
 $
$$
{\rm Ent} \rho  =\int \rho \log \rho d \gamma
 $$
 is the Gaussian entropy of $g$.

This question is motivated by the following infinite-dimensional analog of the
  moment measure problem.
Let $\gamma $ be the standard  Gaussian product measure on $\mathbb{R}^{\infty}$
and let $\nu = g \cdot \gamma$  be a probability measure
such that
$$\int x_i g d \gamma=0 \quad \hbox{for every $i \in \mathbb{N}$.}
$$
The problem is to find a    log-concave measure  $\mu = e^{-\varphi} \cdot \gamma$
such that $\nu$ is the image of $\mu$ under the mapping
$$
T(x) = x + \nabla \varphi,
$$
where  $\nabla \varphi$ is the Cameron--Martin gradient.

There exists a rich  theory of optimal transportation on the
Wiener space with a number of interesting results
(see \cite{BoKo2005}, \cite{BoKo2011}, \cite{BoKo2012},
\cite{Cav},  \cite{FN}, \cite{FU}, and \cite{Kol04}).
So the well-posedness of the moment measure problem on the Wiener space
is a natural and interesting question. We emphasize that
the finite-dimensional estimates obtained in this paper are the first crucial
step towards infinite-dimensional spaces. However,
the infinite-dimensional moment measure problem
seems to be delicate and requires hard technical work. This will be done  in a forthcoming
paper of the authors.

The following theorem is our main result (see Theorem \ref{main}).

  \begin{theorem}
    Assume that $g$ is a probability density satisfying
  $
  I(g) < \infty,
  $
  where
   $$
   I(g)= \int \frac{|\nabla g|^2}{g} d \gamma
   $$
   is the Gaussian Fisher information of $g$.
   Then there exists a constant $C>0$ depending only on $I(g)$    such that
   $$
   \mathcal{F}_{\gamma} \ge -C
   $$
and
   $$
   W^2_2(g \cdot \gamma, \rho \cdot \gamma) \le C,
   $$
   where $\rho \cdot \gamma$ is the minimum point
   of $\mathcal{F}_{\gamma}$ satisfying the condition $\int x  \rho d \gamma=0$.
      \end{theorem}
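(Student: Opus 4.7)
The plan is to combine the $1$-geodesic convexity (Bakry--\'Emery $CD(1,\infty)$ displacement convexity) of $\mu\mapsto\mathrm{Ent}(\mu|\gamma)$ along $W_2$-geodesics with stability refinements of the Gaussian log-Sobolev and Talagrand transportation inequalities, as announced in the abstract. First, applying $1$-convexity to the Wasserstein geodesic from $g\cdot\gamma$ to $\rho\cdot\gamma$---with $T$ the Brenier map from $g\cdot\gamma$ to $\rho\cdot\gamma$---yields
$$
\mathrm{Ent}(\rho\cdot\gamma)-\mathrm{Ent}(g\cdot\gamma) \ge \int\langle T-x,\nabla\log g\rangle\,g\,d\gamma + \tfrac{1}{2}W_2^2(g\cdot\gamma,\rho\cdot\gamma).
$$
The Cauchy--Schwarz inequality, together with $\|\nabla\log g\|_{L^2(g\cdot\gamma)}^2=I(g)$, bounds the cross term below by $-W_2(g\cdot\gamma,\rho\cdot\gamma)\sqrt{I(g)}$, so rearrangement gives the central inequality
$$
\mathcal{F}_\gamma(\rho) \ge \mathrm{Ent}(g\cdot\gamma) - W_2(g\cdot\gamma,\rho\cdot\gamma)\sqrt{I(g)}.
$$

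Next I control $W_2(g\cdot\gamma,\rho\cdot\gamma)$ from above by combining the triangle inequality with Talagrand's $T_2$ inequality and the log-Sobolev inequality $2\,\mathrm{Ent}(g\cdot\gamma)\le I(g)$:
$$
W_2(g\cdot\gamma,\rho\cdot\gamma) \le \sqrt{I(g)}+\sqrt{2\,\mathrm{Ent}(\rho)}.
$$
Substitution and $\mathrm{Ent}(g\cdot\gamma)\ge 0$ give $\mathcal{F}_\gamma(\rho)\ge -I(g)-\sqrt{2I(g)\,\mathrm{Ent}(\rho)}$. To eliminate the surviving $\mathrm{Ent}(\rho)$-dependence I would use the identity $\mathrm{Ent}(\rho)=\mathcal{F}_\gamma(\rho)+\tfrac{1}{2}W_2^2(g\cdot\gamma,\rho\cdot\gamma)$ together with a stability refinement of Talagrand---quantifying the deficit $2\,\mathrm{Ent}(\rho)-W_2^2(\rho\cdot\gamma,\gamma)$ in terms of the deviation of $\rho\cdot\gamma$ from the extremal shifted-Gaussian family---to close a self-consistent inequality and deduce $\mathcal{F}_\gamma(\rho)\ge-C(I(g))$.

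For the a priori estimate $W_2^2(g\cdot\gamma,\rho\cdot\gamma)\le C$ at the minimizer $\rho$ with $\int x\rho\,d\gamma=0$, I would combine the lower bound above with the trivial upper bound $\mathcal{F}_\gamma(\rho)\le\mathcal{F}_\gamma(1)=-\tfrac{1}{2}W_2^2(g\cdot\gamma,\gamma)\le 0$; this traps $\mathrm{Ent}(\rho)$ within a $C$-window about $\tfrac{1}{2}W_2^2(g\cdot\gamma,\rho\cdot\gamma)$. The zero-barycenter normalization eliminates the translational symmetry that otherwise saturates Talagrand, and combining this with the stability estimate should let one solve for $W_2(g\cdot\gamma,\rho\cdot\gamma)$ in terms of $I(g)$ alone. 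The main obstacle is the closing step: na\"\i vely applying the triangle inequality leaves a $\sqrt{\mathrm{Ent}(\rho)}$ term that returns when $\mathrm{Ent}(\rho)$ is rewritten through $\mathcal{F}_\gamma(\rho)+\tfrac{1}{2}W_2^2$, so an appropriately quantitative stability estimate for Talagrand (or log-Sobolev) is indispensable; identifying its precise form and verifying that the resulting constant depends only on $I(g)$ is the principal technical difficulty.
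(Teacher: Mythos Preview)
Your opening displacement-convexity estimate and the resulting bound
$\mathcal{F}_\gamma(\rho)\ge\mathrm{Ent}(g)-W_2(g\cdot\gamma,\rho\cdot\gamma)\sqrt{I(g)}$
are fine, and you correctly isolate the difficulty: the loop does not close because $\mathrm{Ent}(\rho)$ reappears. But the way you propose to close it---a Talagrand stability estimate for $\rho\cdot\gamma$ relying only on $\int x\rho\,d\gamma=0$---will not work in a dimension-free manner. The known centered-measure deficits for Talagrand (as in the paper's Proposition~2.12) carry factors like $n^{-1/2}$, and nothing in your setup rules out $\rho\cdot\gamma$ being close to a saturating Gaussian with a non-identity covariance, for which the Talagrand deficit is tiny while $\mathrm{Ent}(\rho)$ is large.

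What you are missing is the structural identity that is specific to the minimizer: since $\rho=e^{-\varphi}$ and $x\mapsto x+\nabla\varphi$ is the optimal map pushing $\rho\cdot\gamma$ onto $g\cdot\gamma$, one has
\[
I(\rho)=\int|\nabla\varphi|^2 e^{-\varphi}\,d\gamma=W_2^2(\rho\cdot\gamma,g\cdot\gamma).
\]
This converts the problem into a \emph{log-Sobolev} stability question for $\rho$, not a Talagrand one: any inequality of the form $\mathrm{Ent}(\rho)\le\tfrac{1-\delta}{2}I(\rho)$ immediately yields $W_2(\rho\cdot\gamma,\gamma)\le\sqrt{1-\delta}\,W_2(\rho\cdot\gamma,g\cdot\gamma)$, and then the triangle inequality gives $W_2(\rho\cdot\gamma,g\cdot\gamma)\le(1-\sqrt{1-\delta})^{-1}W_2(g\cdot\gamma,\gamma)$ with no circularity. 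The paper obtains such a $\delta$ from the Fathi--Indrei--Ledoux estimate, which requires the Poincar\'e constant $C_P$ of $\rho\cdot\gamma$; the real work (your proposal has no analogue of it) is to bound $C_P$ in terms of $I(g)$ alone. This is done via the Brascamp--Lieb inequality for the log-concave measure $e^{-\varphi}\cdot\gamma$ combined with E.~Milman's equivalence of concentration and isoperimetry, leading to a self-referential inequality for $C_P$ that can be solved. Without exploiting the log-concavity of the minimizer and the identity $I(\rho)=W_2^2$, there is no evident dimension-free route to closing your estimate.
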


Our approach is based on certain stability results for the log-Sobolev
 and the Talagrand transportation inequalities:
 $$
 \frac{1}{2} I(g) - {\rm Ent} g \ge \delta_1(g),$$
 $$  {\rm Ent} g  - \frac{1}{2} W^2_2(\gamma, g \cdot \gamma)
 \ge \delta_2(g),
 $$
 where $\delta_1, \delta_2$ are
 some non-negative functionals defined on probability densities.

 The stability of the Euclidean isoperimetric
 inequality (see the survey paper  \cite{Fig})
 and the Gaussian inequalities  (see \cite{IM},
 \cite{BGRS}, \cite{FIL}, \cite{CFP}, and \cite{LGP})
 has been recently studied by many researchers.
 In this paper we establish several new results in this direction and  give new simple proofs
 of some previously known inequalities.

Finally, we obtain  a priori estimates for the  (centered) minimum
point $\rho \cdot \gamma = e^{-\varphi} \cdot \gamma$ of $\mathcal{F}_{\gamma}$.
In particular, applying the approach developed in \cite{BoKo} for
the standard Monge--Kantorovich problem, we establish
new bounds for the entropy- and information-type  functionals
$$
\int \rho |\log \rho|^p d \gamma, \ \int \rho \Bigl| \frac{\nabla \rho}{\rho}\Bigr|^p d \gamma, \ p \ge 1,
$$
and certain exponential moments.

\section{Stability results}

\subsection{Notation}

We shall use some standard results and terminology from Gaussian analysis
(see \cite{BogGauss})
and optimal transportation theory (see \cite{BoKo2012}).

Let $\gamma$ be the standard Gaussian measure on $\mathbb{R}^n$:
$$
\gamma = \frac{1}{(\sqrt{2 \pi})^n} e^{-\frac{|x|^2}{2}} dx.
 $$
 We denote by $T$
 the optimal transportation taking  $g \cdot \gamma$ to $\gamma$.
 Recall that $T$  gives a minimum to the functional
 $$
F \to  \int | F(x) - x |^2  g d \gamma
 $$
 considered on the mappings taking $g \cdot \gamma$ to $\gamma$.

Moreover, $T$ is the gradient of a convex function.
It can be written in the form
$$
T(x) = x + \nabla \varphi(x),
$$
where   the potential $\varphi$ satisfies the estimate
$$
D^2 \varphi \ge - \rm{Id}.
$$
The corresponding Kantorovich distance $W_2(\gamma, g \cdot \gamma)$ for the cost function
$c(x,y) = |x-y|^2$ can be computed as follows:
$$
W^2_2(\gamma, g \cdot \gamma) = \int |\nabla \varphi|^2 g d \gamma.
$$
The  notation
$$\| A\| = \sqrt{ {\rm Tr} (AA^T)}$$
will be used for the Hilbert--Schmidt norm of the matrix $A$
and $\| A\| _{op}$ will denote  the operator norm.
We also use the standard notation for the (Gaussian) entropy
$$
{\rm Ent} g = \int g \log g d \gamma
$$
and  information
$$
I(g) = \int \frac{|\nabla g|^2}{g} d \gamma.
$$

\subsection{Stability for  the logarithmic Sobolev inequality}

The celebrated Gaussian  logarithmic Sobolev inequality
\begin{equation}
\label{LSI}
\frac{1}{2} I(g) \ge {\rm Ent} g
\end{equation}
is one of the central results in  Gaussian analysis.
Here $g$ is a  sufficiently regular probability density.
For  the proofs and the history of (\ref{LSI}), see \cite{BogGauss}, \cite{Led}, and \cite{BGL}.

Throughout the paper we assume that $g$ has finite information.

{\bf Assumption I.}
$$
I(g) < \infty.
$$

It is known that (\ref{LSI}) is sharp and the corresponding minimizers have the form
$g = e^{l}$, where $l$ is an affine function. This has been proved by Carlen in \cite{Carlen}.
He has shown that the so-called log-Sobolev deficit
$$
\frac{1}{2} I(g) - {\rm Ent} g
$$
is bounded from below by a non-negative term,
which is a functional involving certain integral transform of $g$.

Yet another representation has been obtained in  \cite{Kol}:
\begin{theorem} (\cite{Kol})
Let $
T = x + \nabla \varphi
$
be the optimal transportation taking  $g \cdot \gamma$ nto $\gamma$, where
$g$ is a sufficiently regular probability density.
Then the following representation holds:
\begin{align}
\label{BochGauss}
I(g)
& = 2  {\rm Ent} g
+ 2 \int \bigl( \Delta \varphi - \log \det(I +  D^2 \varphi) \bigr) g d \gamma
 + \int \| D^2 \varphi\|^2 g d \gamma \nonumber \\& + \int \sum_{i=1}^n {\rm Tr} \Bigl[ ({\rm Id} + D^2 \varphi)^{-1} (D^2 \varphi_{x_i} ) \Bigr]^2 g d\gamma.
\end{align}
\end{theorem}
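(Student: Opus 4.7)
The plan is to combine the Monge--Ampère equation for $T = x+\nabla\varphi$ pushing $g\cdot\gamma$ to $\gamma$ with Gaussian integration by parts, using the identity at two levels: integrating against $g\,d\gamma$ to produce the first two terms on the right-hand side of \eqref{BochGauss}, and differentiating it to produce the last two. The change-of-variables formula for the push-forward yields the logarithmic Monge--Ampère equation
$$
\log g(x) = -x\cdot\nabla\varphi(x) - \tfrac{1}{2}|\nabla\varphi(x)|^2 + \log\det\bigl(I+D^2\varphi(x)\bigr).
$$
Multiplying by $g$, integrating against $\gamma$, and applying the Gaussian identity $\int x_i f\,d\gamma = \int\partial_i f\,d\gamma$ to rewrite $\int x\cdot\nabla\varphi\cdot g\,d\gamma = \int(\Delta\varphi\cdot g + \nabla\varphi\cdot\nabla g)\,d\gamma$, one obtains after rearrangement
$$
2\,{\rm Ent}\,g + 2\!\int\!\bigl(\Delta\varphi - \log\det(I+D^2\varphi)\bigr) g\,d\gamma = -2\!\int\!\nabla\varphi\cdot\nabla g\,d\gamma - \!\int\!|\nabla\varphi|^2 g\,d\gamma.
$$

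Next, I would differentiate the log-MA equation and apply Jacobi's formula $\partial_i\log\det(I+D^2\varphi) = w_i := \text{Tr}\bigl((I+D^2\varphi)^{-1} D^2\varphi_{x_i}\bigr)$ to derive
$$
\nabla\log g + \nabla\varphi = w - D^2\varphi\cdot T.
$$
Squaring this representation, using $\nabla g = g\,\nabla\log g$, and subtracting the intermediate identity above, \eqref{BochGauss} reduces to
$$
\int|\nabla\log g + \nabla\varphi|^2 g\,d\gamma = \int\|D^2\varphi\|^2 g\,d\gamma + \int|w|^2 g\,d\gamma.
$$
After cancellation of the $|w|^2$ contribution the problem becomes the single key identity
$$
\int|D^2\varphi\cdot T|^2 g\,d\gamma - 2\!\int\! (D^2\varphi\cdot T)\cdot w\,g\,d\gamma = \int\|D^2\varphi\|^2 g\,d\gamma.
$$

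Proving this last identity is the main obstacle. I would attack it by changing variables via $y = T(x)$, which transforms it into an identity on $(\mathbb{R}^n,\gamma)$ involving $\tilde B(y) = D^2\varphi(T^{-1}(y))$ and $\tilde w(y) = w(T^{-1}(y))$. Gaussian integration by parts $\int y_j f\,d\gamma = \int\partial_{y_j} f\,d\gamma$ applied to one factor of $y$ in $\int|\tilde B y|^2\,d\gamma$ produces $\int\|\tilde B\|^2 d\gamma$ together with two cross terms. The chain-rule identity $\sum_j\partial_{y_j}\tilde B_{ij} = \tilde w_i$, which follows from $DT^{-1} = (I+D^2\varphi)^{-1}$ combined with Jacobi's formula, identifies the first cross term with $\int(\tilde B y)\cdot\tilde w\,d\gamma$. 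The remaining cross term is a contraction of $\tilde B$ with the push-forward of the third-derivative tensor $\partial_i\partial_j\partial_k\varphi$; a further Gaussian integration by parts in the $y$-variable, exploiting the symmetry of this tensor and the algebraic relation $I+\tilde B = (D^2\Psi)^{-1}$ for the potential $\Psi$ of the inverse transport $T^{-1}$, reduces it to the same expression $\int(\tilde B y)\cdot\tilde w\,d\gamma$. The resulting cancellation of the two cross terms against the $-2\int(\tilde B y)\cdot\tilde w\,d\gamma$ on the left completes the proof. The subtle point in this last step is the tensor bookkeeping: the contraction produced by the push-forward is a priori different from $(\tilde B y)\cdot\tilde w$, and only after invoking the symmetry of third derivatives and the explicit formula for $\tilde w$ in terms of $\log\det D^2\Psi$ does the identification go through.
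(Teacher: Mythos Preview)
The paper does not prove this theorem; it is quoted from \cite{Kol}. Your opening steps---writing the logarithmic Monge--Amp\`ere equation, integrating it against $g\,d\gamma$ with Gaussian integration by parts, and then differentiating it---are correct and are indeed the ingredients of the proof in \cite{Kol}. The derivation of
\[
2\,{\rm Ent}\,g + 2\!\int\!\bigl(\Delta\varphi - \log\det(I+D^2\varphi)\bigr) g\,d\gamma \;=\; -2\!\int\!\nabla\varphi\cdot\nabla g\,d\gamma - \!\int\!|\nabla\varphi|^2 g\,d\gamma
\]
and of the pointwise identity $\nabla\log g + \nabla\varphi = w - D^2\varphi\cdot T$ is fine.

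The gap is a misreading of the last term in \eqref{BochGauss}. You interpret
\[
\sum_{i=1}^n {\rm Tr}\Bigl[(I+D^2\varphi)^{-1} D^2\varphi_{x_i}\Bigr]^2
\]
as $\sum_i w_i^2 = |w|^2$ with $w_i = {\rm Tr}\bigl((I+D^2\varphi)^{-1}D^2\varphi_{x_i}\bigr)$. In fact it means $\sum_i {\rm Tr}\bigl(M_i^2\bigr)$ with $M_i = (I+D^2\varphi)^{-1}D^2\varphi_{x_i}$, i.e.\ a Hilbert--Schmidt-type quantity. This is confirmed a few lines later in the paper, where the same expression is rewritten (with $D^2\Phi = I+D^2\varphi$) as $\sum_i \bigl\|(D^2\Phi)^{-1/2}D^2\Phi_{x_i}(D^2\Phi)^{-1/2}\bigr\|^2$; see the proof of the Corollary and identity~(\ref{0701}). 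The two readings agree only for $n=1$.

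Because of this, your ``cancellation of the $|w|^2$ contribution'' does not occur, and the residual ``key identity''
\[
\int|D^2\varphi\cdot T|^2 g\,d\gamma - 2\!\int\! (D^2\varphi\cdot T)\cdot w\,g\,d\gamma \;=\; \int\|D^2\varphi\|^2 g\,d\gamma
\]
is not the correct target (and is not true in general for $n\ge 2$). The correct reduction is to
\[
\int |w - D^2\varphi\cdot T|^2\,g\,d\gamma \;=\; \int \|D^2\varphi\|^2\,g\,d\gamma \;+\; \int \sum_{i=1}^n {\rm Tr}\bigl(M_i^2\bigr)\,g\,d\gamma,
\]
which is precisely identity (\ref{0701}) shifted from $D^2\Phi$ to $D^2\varphi$. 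Your change of variables $y=T(x)$ and Gaussian integration by parts on $\int|\tilde B y|^2\,d\gamma$ are the right moves, but the cross terms they produce must be matched against $\sum_i{\rm Tr}(M_i^2)$, not against $|w|^2$; the tensor bookkeeping you outline in the last paragraph does not close because it is aimed at the wrong expression.
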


\begin{remark}
\rm
{\bf (Regularity of $\varphi$)}. The gradient of $\varphi$
is well-defined almost everywhere because $\frac{|x|^2}{2} + \varphi $ is a convex function.
Identity (\ref{BochGauss}) ensures that $\varphi$ belongs to an appropriate second-order  Sobolev space
(see \cite{BoKo2011} for details). The reader can always assume that $g$
is bounded away from zero and locally smooth; this implies the local smoothness of $\varphi$
(see \cite{Kol} and \cite{BoKo2011}). In almost all our statements the minimal
assumption about  $g$ is $I(g) < \infty$. This case follows easily  from the case of
a smooth potential by the standard approximation procedure.
\end{remark}

It is important to mention that all the terms in the right-hand side of (\ref{BochGauss})
are non-negative.
This result is closely related to the so-called Gaussian stability inequalities,
which have been recently investigated in a series of papers  \cite{IM}, \cite{BGRS}, \cite{FIL}, \cite{CFP}.
Technically speaking, these are estimates of the type
$$
\frac{1}{2} I(g) - {\rm Ent} g  \ge F(g \cdot \gamma),
$$
where $F$ is  a non-negative functional on probability densities (measures).
In our work we apply  other well-known results deeply connected with~(\ref{LSI}):
the Gaussian Talagrand  transportation inequality
\begin{equation}
\label{Tal}
{\rm Ent} g  \ge \frac{1}{2} W^2_2(\gamma, g \cdot \gamma)
\end{equation}
and the HWI inequality
\begin{equation}
\label{HWI}
\frac{1}{2} W^2_2(g \cdot \gamma, \gamma)
+ {\rm Ent} g  \le \sqrt{ I(g) } W_2(\gamma, g \cdot \gamma)
\end{equation}
(see \cite{Led} and \cite{BGL}).

The Talagrand inequality and the HWI  inequality follow from the following
identity which is widely used
in transportation inequalities (see \cite[Theorem 9.3.1]{BGL}).

\begin{theorem}
Let $T(x) = x + \nabla u(x)$ be the optimal transportation
taking $g \cdot \gamma$ to $f \cdot \gamma$. Then
\begin{equation}
\label{ent-iden}
{\rm Ent}f = {\rm Ent} g +  \frac{1}{2} W_2^2(g\cdot\gamma, f \cdot\gamma)
+ \int \langle \nabla u, \nabla g \rangle d \gamma
+
\int \bigl( \Delta u- \log\det(I+D^2 u) \bigr) \, g d\gamma.
\end{equation}
\end{theorem}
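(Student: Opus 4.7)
The plan is to derive the identity from the Monge--Amp\`ere change of variables for the transport $T(x)=x+\nabla u(x)$ and then use Gaussian integration by parts.

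\medskip

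\noindent\textbf{Step 1 (Monge--Amp\`ere relation).} Since $T$ pushes $g\cdot\gamma$ to $f\cdot\gamma$, the standard change of variables gives the pointwise identity
$$
g(x)\,e^{-|x|^2/2}= f(T(x))\,e^{-|T(x)|^2/2}\det(I+D^2u(x))
$$
for $g\cdot\gamma$-a.e.\ $x$. Taking logarithms and using $|T(x)|^2=|x|^2+2\langle x,\nabla u\rangle+|\nabla u|^2$, I get
$$
\log f(T(x)) = \log g(x) + \langle x,\nabla u\rangle + \tfrac12|\nabla u|^2 - \log\det(I+D^2u).
$$

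\noindent\textbf{Step 2 (Integrate against $g\,d\gamma$).} Using that $T_{\#}(g\gamma)=f\gamma$, I compute
$$
{\rm Ent}\,f = \int\log f\, d(f\gamma) = \int \log f(T(x))\, g(x)\, d\gamma(x).
$$
Substituting the expression from Step~1 and recognizing $\int|\nabla u|^2 g\,d\gamma = W_2^2(g\cdot\gamma, f\cdot\gamma)$ and $\int\log g\cdot g\, d\gamma={\rm Ent}\,g$, I reduce the identity to proving
$$
\int \langle x,\nabla u\rangle\, g\, d\gamma \;=\; \int \langle\nabla u,\nabla g\rangle\, d\gamma + \int \Delta u\, g\, d\gamma.
$$

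\noindent\textbf{Step 3 (Gaussian integration by parts).} Applying the standard Gaussian IBP formula $\int x_i\phi\, d\gamma=\int\partial_i\phi\, d\gamma$ to $\phi=g\,\partial_iu$ coordinatewise and summing over $i$ yields exactly the displayed identity. Combining everything gives
$$
{\rm Ent}\,f = {\rm Ent}\,g + \tfrac12 W_2^2(g\cdot\gamma,f\cdot\gamma) + \int\langle\nabla u,\nabla g\rangle\, d\gamma + \int\bigl(\Delta u - \log\det(I+D^2u)\bigr)\, g\, d\gamma.
$$

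\medskip

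\noindent\textbf{Main obstacle.} The computation is essentially bookkeeping once the formal manipulations are justified; the delicate point is \emph{regularity}. The potential $\tfrac{|x|^2}{2}+u$ is merely convex, so $D^2 u$ exists only in the sense of Alexandrov, and both the Monge--Amp\`ere identity of Step~1 and the integration by parts of Step~3 must be justified. As noted in the regularity remark after (\ref{BochGauss}), under the assumption $I(g)<\infty$ (or even $I(f),I(g)<\infty$) one has the required Sobolev regularity of $u$, and in general the identity follows by approximating $g,f$ by smooth densities bounded away from $0$, for which classical Caffarelli-type regularity gives smoothness of $u$, and then passing to the limit using lower semicontinuity of the entropy, the Wasserstein distance, and the nonnegativity of $\Delta u-\log\det(I+D^2u)$ (guaranteed by $D^2u\ge-{\rm Id}$) to control the remainder term.
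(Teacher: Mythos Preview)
Your argument is correct and is exactly the standard derivation of this identity. Note that the paper does not actually prove this theorem: it simply states the identity and refers the reader to \cite[Theorem~9.3.1]{BGL}. Your Steps~1--3 (Monge--Amp\`ere change of variables, integration against $g\,d\gamma$, and Gaussian integration by parts) constitute precisely the proof one finds in that reference, and your remarks on regularity and approximation are appropriate and consistent with the paper's own treatment of such issues (cf.\ the regularity remark following~\eqref{BochGauss}).
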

To prove (\ref{Tal}) we set $g=1$ and use that $\Delta u- \log\det(I+D^2 u)$ is non-negative.
For (\ref{HWI}) we set $f=1$ and apply the Cauchy inequality.

In the proof of our main result we apply the following theorem from \cite{FIL}.

\begin{theorem}
 \label{FILest}
(\cite{FIL},  Theorem 1). Assume that $\nu = g \cdot \gamma$ satisfies the Poincar{\`e} inequality
$$
\int \Bigl( f - \int f g d \gamma\Bigr)^2 g d \gamma \le C_P \int |\nabla f|^2 g d \gamma
$$
and $\int x g d \gamma=0$. Then the following inequality holds:
\begin{equation}
\label{stab-opt}
\frac{1}{2}I (g) -
{\rm Ent} g  \ge \frac{1}{2} \frac{C_P \log C_P - C_P +1}{(C_P-1)^2} I(g).
\end{equation}
\end{theorem}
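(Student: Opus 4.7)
My approach is to combine three ingredients already set up in the excerpt: the Bochner-type identity (\ref{BochGauss}), the HWI inequality (\ref{HWI}), and the Poincaré hypothesis on $g\cdot\gamma$, channeled through the optimal transport $T(x)=x+\nabla\varphi(x)$ from $g\cdot\gamma$ to $\gamma$.

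First I would exploit the centering hypotheses. Since $T$ pushes $g\cdot\gamma$ onto the centered measure $\gamma$ and $\int x\, g\, d\gamma=0$, one has $\int\nabla\varphi\, g\, d\gamma=0$, so each partial derivative $\partial_i\varphi$ is centered in $L^2(g\cdot\gamma)$. Applying the Poincaré hypothesis component-wise and summing in $i$ yields
\begin{equation*}
W_2^2(g\cdot\gamma,\gamma)=\int|\nabla\varphi|^2\, g\, d\gamma\le C_P\int\|D^2\varphi\|^2\, g\, d\gamma.
\end{equation*}
Next, from (\ref{BochGauss}), dropping the two non-negative terms involving $\Delta\varphi-\log\det(I+D^2\varphi)$ and $\sum_i{\rm Tr}[\,\cdot\,]^2$, one obtains $\int\|D^2\varphi\|^2\, g\, d\gamma\le I(g)-2\,{\rm Ent}\, g$, which combined with the Poincaré bound above gives the central estimate
\begin{equation*}
W_2^2\le C_P\bigl(I(g)-2\,{\rm Ent}\, g\bigr).
\end{equation*}
Finally, the HWI inequality (\ref{HWI}) is equivalent to $\tfrac{1}{2}I(g)-{\rm Ent}\, g\ge \tfrac{1}{2}\bigl(\sqrt{I(g)}-W_2\bigr)^2$, so substituting the upper bound on $W_2$ and performing an elementary quadratic manipulation extracts a lower bound of the form $c(C_P)\,I(g)$ on the log-Sobolev deficit.

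I expect the main obstacle to be identifying the \emph{sharp} constant $c(C_P)=\tfrac{1}{2}\frac{C_P\log C_P-C_P+1}{(C_P-1)^2}$. The three-step algebraic scheme above yields only a weaker bound of the form $\tfrac{1}{2}\bigl(\tfrac{\sqrt{C_P}-1}{\sqrt{C_P}+1}\bigr)^2 I(g)$, which loses a non-trivial factor. The integral representation
\begin{equation*}
\frac{C_P\log C_P-C_P+1}{(C_P-1)^2}=\int_0^1\frac{1-s}{1+s(C_P-1)}\, ds
\end{equation*}
is a strong hint that the sharp constant comes from interpolating along the Ornstein--Uhlenbeck semigroup $g_t=P_t g$: de Bruijn's identity $\mathrm{Ent}\, g=\int_0^\infty I(g_t)\, dt$ combined with a time-dependent Poincaré/Bochner control for $g_t\cdot\gamma$, followed by the change of variables $s=e^{-2t}$, should recover the precise constant. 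Carrying out this semigroup interpolation cleanly (in particular, propagating the Poincaré bound along $P_t$) is the technical step I would need to work hardest on.
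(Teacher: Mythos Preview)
The paper does not supply its own proof of this statement; it is quoted from \cite{FIL}. What the paper \emph{does} contain, in the Remark immediately following the theorem, is exactly the first half of your plan: from (\ref{BochGauss}) and the Poincar\'e hypothesis one obtains $W_2^2(g\cdot\gamma,\gamma)\le C_P\bigl(I(g)-2\,{\rm Ent}\, g\bigr)$, and feeding this into the HWI inequality (\ref{HWI}) yields the non-sharp bound
\[
I(g)-2\,{\rm Ent}\, g \ge \frac{1}{(1+\sqrt{C_P})^2}\,I(g)
\]
(incidentally, this is what the optimization actually gives, not the constant $\tfrac{1}{2}\bigl(\tfrac{\sqrt{C_P}-1}{\sqrt{C_P}+1}\bigr)^2$ you wrote). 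The authors then state explicitly that they ``do not know how to deduce (\ref{stab-opt}) from (\ref{BochGauss}).'' So your self-diagnosis is confirmed by the paper itself: the Bochner\,+\,Poincar\'e\,+\,HWI route is a dead end for the sharp constant.

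Your second paragraph points in the correct direction. The proof in \cite{FIL} is indeed an Ornstein--Uhlenbeck interpolation, and the integral representation of $\frac{C_P\log C_P-C_P+1}{(C_P-1)^2}$ you wrote is precisely its fingerprint. The technical point you would need is not propagation of the Poincar\'e constant along $P_t$, but rather a differential inequality for $t\mapsto I(P_t g)$: combining the commutation $|\nabla P_t f|\le e^{-t}P_t|\nabla f|$ with the Poincar\'e inequality for $g\cdot\gamma$ (applied at the level of $\nabla\log P_t g$) yields an exponential-type decay that, once integrated via de~Bruijn's identity, produces the exact constant. In short: your outline is sound, but only the semigroup half of it can reach (\ref{stab-opt}), and the paper concurs.
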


\begin{remark}
\rm
Stability estimates of the same type,
 but with non-sharp constants  can be derived  from (\ref{BochGauss}) and
(\ref{HWI}).
Let $\nu$ satisfy the assumptions of Theorem \ref{FILest}. Then
$$
I(g)  \ge 2 {\rm Ent} g   + \frac{1}{C_P} W^2_2(\gamma, g \cdot \gamma).
$$
Indeed, the result follows immediately from (\ref{BochGauss})
and the following computations (we use the Poincar{\`e}
inequality and the change of variables formula)
$$
\int \|D^2 \varphi\|^2 g d \gamma = \sum_{i=1}^n \int |\nabla \varphi_{x_i} |^2 g d \gamma
\ge \frac{1}{C_P} \int \varphi^2_{x_i} g d \gamma  - \frac{1}{C_P}\Bigl(\int \varphi_{x_i} g d \gamma \Bigr)^2,
$$
$$
\int \varphi_{x_i} g d \gamma  = \int (T_i - x_i) g d \gamma  = \int x_i d \gamma -  \int x_i g d \gamma =0.
$$

Applying (\ref{HWI}) we get the following estimate for arbitrary $K \ge 1$:
$$
\tfrac{1}{2}W_2^2(g\cdot\gamma, \gamma) + {\rm Ent} g  \le \sqrt{I(g)}W_2(g\cdot\gamma, \gamma)
\le
\tfrac{1}{2} \Bigl(KW_2^2(g\cdot\gamma, \gamma) + \frac{1}{K} I(g) \Bigr).
$$
Hence
$$
{\rm Ent} g \le \frac{1}{2K} I(g) + \frac{K-1}{2} W_2^2(g\cdot\gamma, \gamma)
\le  \frac{1}{2K} I(g) + \frac{C_P(K-1)}{2} ( I(g) -  2{\rm Ent }).
$$
Equivalently,
$
{\rm Ent} g \le \frac{1}{2} I(g) \Bigl( \frac{\frac{1}{K} + (K-1)C_P}{1+ (K-1)C_P}\Bigr).
$
Choosing the optimal value of $K$, which is  $K = 1 + \frac{1}{\sqrt{C_P}}$, one gets
\begin{equation}
\label{MainStab-1}
I(g) - 2{\rm Ent} g \ge\frac{1}{(1 + \sqrt{C_P})^2} I(g).
\end{equation}

Note that this is a result of the same type as  in Theorem \ref{FILest},
but for large values of $C_P$ the constant in the right-hand
side of (\ref{stab-opt}) is of order
$\frac{\log C_P}{C_P}$, which is stronger than our result. We observe that the proof of
(\ref{MainStab-1}) modulo (\ref{BochGauss}) is easier than the proof of (\ref{stab-opt}),
but we  do not know how to deduce (\ref{stab-opt})  from (\ref{BochGauss}).
\end{remark}

We now prove another stability result  (\ref{igncp})
similar to (\ref{stab-opt}). In particularly,
both estimates are sharp: the equalities hold
for $g = \lambda^{\frac{n}{2}} e^{\frac{(1-\lambda)}{2} |x|^2}$.
Note, however, that under the assumption $I(g)< \infty$ the right-hand side of (\ref{stab-opt}) is always finite
and dimension-free, which is not the case for (\ref{igncp}).
This fact has rather unexpected interesting consequences (see Remark \ref{infdPoin}).

\begin{theorem}
Assume that $\nu = g \cdot\gamma$  satisfies the  Poincar{\'e} inequality
$$
\int f^2\, d\nu \le C_P \int|\nabla f|^2\, d\nu, \ \int f d \nu=0
$$
and $C_P \le  1$. Then
\begin{equation}
\label{igncp}
\frac{1}{2}I(g) - {\rm Ent} g \ge \frac{n(C_P \log C_P - C_P +1)}{2 C_P}.
\end{equation}
\end{theorem}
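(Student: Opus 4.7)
The starting point is the Bochner-type identity (\ref{BochGauss}). Discarding the manifestly non-negative term containing ${\rm Tr}[(I+D^2\varphi)^{-1}(D^2\varphi_{x_i})]^2$ yields
\[
\frac{1}{2}I(g) - {\rm Ent}\,g \ge \int \bigl(\Delta\varphi - \log\det(I+D^2\varphi)\bigr)\, g\, d\gamma + \frac{1}{2}\int \|D^2\varphi\|^2\, g\, d\gamma,
\]
where $T=x+\nabla\varphi$ is the optimal transport taking $g\cdot\gamma$ to $\gamma$. Set $A := \int D^2\varphi\, g\, d\gamma$ (a symmetric matrix) and denote its eigenvalues by $a_1,\dots,a_n$. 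Jensen's inequality applied to the concave function $\log\det$ on the positive semidefinite cone (valid since $I+D^2\varphi \ge 0$) gives $\int \log\det(I+D^2\varphi)\,g\,d\gamma \le \log\det(I+A)$, and Jensen applied to the convex Hilbert-Schmidt norm squared gives $\int \|D^2\varphi\|^2\, g\, d\gamma \ge \|A\|^2$. These two bounds are dimension-free and by themselves cannot produce the factor $n$ on the right-hand side of (\ref{igncp}).

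The dimensional content enters through the Poincar{\'e} inequality applied to the coordinates $T_i = x_i + \varphi_{x_i}$. Since $T$ pushes $g\cdot\gamma$ onto $\gamma$, we have $\int T_i\,g\,d\gamma=0$ and $\int T_i^2\,g\,d\gamma=1$, so
\[
1 \le C_P\int |e_i + \nabla\varphi_{x_i}|^2\,g\,d\gamma = C_P\Bigl(1 + 2\int\varphi_{x_i x_i}\,g\,d\gamma + \int|\nabla\varphi_{x_i}|^2\,g\,d\gamma\Bigr).
\]
Summing over $i$ produces a second, Poincar{\'e}-driven, lower bound for the Hilbert-Schmidt term:
\[
\int\|D^2\varphi\|^2\,g\,d\gamma \ge \frac{n(1-C_P)}{C_P} - 2\,{\rm Tr}(A).
\]

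The key step is to interpolate between the Jensen bound $\|A\|^2$ and the Poincar{\'e} bound using the specific weight $\alpha = C_P\in[0,1]$:
\[
\int\|D^2\varphi\|^2\,g\,d\gamma \ge C_P\|A\|^2 + (1-C_P)\Bigl[\tfrac{n(1-C_P)}{C_P} - 2\,{\rm Tr}(A)\Bigr].
\]
Substituting this together with $\int\log\det(I+D^2\varphi)g\,d\gamma \le \log\det(I+A)$ into the first display, the term $-(1-C_P){\rm Tr}(A)$ partially cancels the ${\rm Tr}(A) = \int \Delta\varphi\,g\,d\gamma$ and leaves the clean inequality
\[
\frac{1}{2}I(g)-{\rm Ent}\,g \ge \sum_{i=1}^n \Bigl[C_Pa_i + \tfrac{C_P}{2}a_i^2 - \log(1+a_i)\Bigr] + \frac{n(1-C_P)^2}{2C_P}.
\]

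It remains to minimize the scalar convex function $G(t) := C_P(t+t^2/2) - \log(1+t)$ on $t>-1$. From $G'(t) = [C_P(1+t)^2-1]/(1+t)$ the unique (global) minimizer is $t_* = 1/\sqrt{C_P} - 1$ and a short computation yields $G(t_*) = (1 - C_P + \log C_P)/2$. Therefore $\sum_i G(a_i) \ge n\,G(t_*)$, and the arithmetic
\[
\frac{n(1-C_P+\log C_P)}{2} + \frac{n(1-C_P)^2}{2C_P} = \frac{n(C_P\log C_P - C_P + 1)}{2C_P}
\]
completes the proof. The main conceptual obstacle is identifying the interpolation weight $\alpha = C_P$; it is dictated by the requirement that Jensen and the Poincar{\'e}-transport inequality be simultaneously tight at the extremal Gaussian $g=\lambda^{n/2}e^{(1-\lambda)|x|^2/2}$ with $\lambda=1/C_P$, for which $D^2\varphi = (1/\sqrt{C_P}-1)I$ is a constant multiple of the identity.
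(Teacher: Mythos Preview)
Your argument is correct and yields the sharp inequality, but it is organized differently from the paper's proof. Both proofs start from identity~(\ref{BochGauss}), drop the third-order term, and feed in the Poincar\'e inequality via the coordinates $T_i=x_i+\varphi_{x_i}$ (which have mean zero and second moment~$1$ under $g\cdot\gamma$). The difference lies in how the remaining pointwise expression $\tfrac12\|D^2\varphi\|^2+\Delta\varphi-\log\det(I+D^2\varphi)$ is handled. The paper rewrites it as $\tfrac12\sum_i\Delta((\lambda_i+1)^2-1)$ with $\Delta(t)=t-\log(1+t)$, applies the Fenchel--Young inequality $\Delta(t)\ge st-\Delta^*(s)$ \emph{pointwise}, integrates, and only then uses Poincar\'e in the clean form $\int\|I+D^2\varphi\|^2\,g\,d\gamma\ge n/C_P$; the final step is a one-line optimization in the scalar $s$. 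You instead average first: Jensen on $\log\det$ introduces the matrix $A=\int D^2\varphi\,g\,d\gamma$, and you then need the convex combination (weight~$C_P$) of the Jensen bound $\|A\|^2$ and the Poincar\'e bound on $\int\|D^2\varphi\|^2$ to obtain a clean separable function of the eigenvalues~$a_i$, which you minimize. The paper's route is shorter and avoids the auxiliary object~$A$ and the interpolation trick; your route is perhaps more transparent about \emph{why} the extremizer is the scaled Gaussian (equality in Jensen forces $D^2\varphi$ constant, and the minimization in $a_i$ picks out $a_i=C_P^{-1/2}-1$), which the paper's Fenchel argument obscures.
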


\begin{remark}\label{infdPoin}
\rm
Applying  (\ref{igncp}) to the infinite-dimensional ($n=\infty$)
 Gaussian measure $\gamma$, we obtain the following result:
if $g \cdot \gamma$ satisfies $I(g) < \infty$ and admits a finite Poincar{\'e} constant $C_P$
(for the Cameron--Martin norm), then $C_P \ge 1$. We believe that
the assumption $I(g) < \infty$ is unnecessary for this observation
and can be relaxed.

For the proof, we
write the right-hand side as  $\frac{n\Delta(C^{-1}_P- 1)}{2}$, where $\Delta(t)  = t - \log(1+t)$.
Let us apply (\ref{BochGauss}). Let  $\lambda_i$, $i\in\{1,\ldots, n\}$  be the eigenvalues of $D^2\varphi$.
Then
$$
\tfrac{1}{2}\|D^2\varphi\|_{HS}^2 + \Delta\varphi - \log\det(I+D^2\varphi)
=
\sum_{i=1}^n \tfrac{1}{2}\lambda_i^2 + \lambda_i - \log(1+\lambda_i)
=
\tfrac{1}{2}\sum_{i=1}^n \Delta((\lambda_i+1)^2 - 1).
$$
Note that
$$
\Delta^*(s):=\sup_{t\ge-1}\{st - \Delta(t)\} =
-s - \log(1-s), \quad s\le1.
$$
Thus,
\begin{multline*}
\tfrac{1}{2}\|D^2\varphi\|_{HS}^2 + \Delta\varphi - \log\det(I+D^2\varphi)
\ge
\tfrac{1}{2}\sum_{i=1}^n \bigl[s((\lambda_i+1)^2 - 1) - (-s - \log(1-s))\bigr]
\\
=
\tfrac{1}{2}s\sum_{i=1}^n (\lambda_i+1)^2 + \tfrac{1}{2}n\log(1-s)
=
\tfrac{1}{2}s\|I+D^2\varphi\|_{HS}^2 + \tfrac{1}{2}n\log(1-s).
\end{multline*}
Applying the relations
$$\|I+D^2\varphi\|_{HS}^2 = \sum_{i=1}^n|\nabla (x_i + \varphi_{x_i})|^2,
$$
$$
\int [x_i + \varphi_{x_i}]\, g d\gamma = \int y_i d\gamma = 0,
$$
we obtain for $ s\ge 0$
\begin{multline*}
\frac{1}{2}I(g) - {\rm Ent} g \ge
\frac{s}{2 C_P}\sum_{i=1}^n\int|x_i + \varphi_{x_i}|^2\, g d\gamma + \frac{1}{2}n\log(1-s)
\\=
\frac{s}{2 C_P}\sum_{i=1}^n\int|y_i|^2\, d\gamma + \tfrac{1}{2}n\log(1-s)
=
\frac{n}{2} \Bigl(\frac{s}{C_P}+ \log(1-s)\Bigr).
\end{multline*}
Taking $s = 1-C_P$ we obtain the desired result.
\end{remark}

Identity (\ref{BochGauss}) implies another estimate obtained earlier in
\cite[Theorem 1.1]{BGRS}.

\begin{corollary}
There holds the inequality
$$
I(g) - 2 {\rm Ent} g
\ge n \Delta \Bigl( \frac{1 }{n}\int \Bigl| \frac{\nabla g}{g} - x\Bigr|^2 g d \gamma  - 1\Bigr).
$$
\end{corollary}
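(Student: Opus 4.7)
The plan is to combine the identity (\ref{BochGauss}) with a two-stage Jensen argument and a simple inequality for the function $\Delta(t) = t - \log(1+t)$ (as introduced in the preceding Remark), which is convex on $(-1,\infty)$.

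First, I would reinterpret the last summand of (\ref{BochGauss}). By Jacobi's formula, $\partial_j \log\det(I+D^2\varphi) = {\rm Tr}((I+D^2\varphi)^{-1} D^2\varphi_{x_j})$, so that the final term in (\ref{BochGauss}) equals $\int |\nabla \log\det(I+D^2\varphi)|^2 \, g\, d\gamma$. Combining this with the eigenvalue identity from the Remark (denoting by $\mu_i$ the eigenvalues of $I + D^2\varphi$), I rewrite (\ref{BochGauss}) in the compact form
$$
I(g) - 2\, {\rm Ent}\,g = \int \sum_{i=1}^n \Delta(\mu_i^2 - 1) \, g\, d\gamma + \int |\nabla \log\det(I+D^2\varphi)|^2 \, g\, d\gamma.
$$

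Next, by Jensen's inequality applied to the counting measure on $\{1,\dots,n\}$, pointwise $\sum_{i} \Delta(\mu_i^2 - 1) \ge n\,\Delta\bigl( \tfrac{1}{n}\|I+D^2\varphi\|^2 - 1 \bigr)$. I then invoke the elementary ``subadditivity'' inequality
$$
\Delta(a) + b \ge \Delta(a+b), \qquad a > -1,\ b \ge 0,
$$
which is immediate from $\Delta(a+b) - \Delta(a) = b + \log\tfrac{1+a}{1+a+b} \le b$. Applying this pointwise with $a = \tfrac{1}{n}\|I+D^2\varphi\|^2 - 1$ and $b = \tfrac{1}{n}|\nabla\log\det(I+D^2\varphi)|^2$, then integrating against $g\,d\gamma$ and applying Jensen one more time to the convex function $\Delta$, yields
$$
I(g) - 2\,{\rm Ent}\,g \ge n\, \Delta\Bigl( \tfrac{1}{n}\int \bigl( \|I+D^2\varphi\|^2 + |\nabla\log\det(I+D^2\varphi)|^2 \bigr) g\, d\gamma - 1 \Bigr).
$$

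The final, and main technical, step is the identity
$$
\int \bigl(\|I+D^2\varphi\|^2 + |\nabla \log\det(I+D^2\varphi)|^2 \bigr) g\, d\gamma = \int \Bigl| \frac{\nabla g}{g} - x \Bigr|^2 g\, d\gamma.
$$
I would derive it by substituting the Monge--Amp\`ere relation $\log g = \tfrac{1}{2}|x|^2 - \tfrac{1}{2}|T|^2 + \log\det(I+D^2\varphi)$ back into (\ref{BochGauss}): integration against $g\,d\gamma$ together with $\int |T|^2 g\,d\gamma = n$ replaces $\int \log\det(I+D^2\varphi)\,g d\gamma$ by ${\rm Ent}\,g + \tfrac{n}{2} - \tfrac{1}{2}\int|x|^2 g\,d\gamma$; expanding $\|I+D^2\varphi\|^2 = n + 2\Delta\varphi + \|D^2\varphi\|^2$ and matching the result with the direct Gaussian integration-by-parts computation $\int |\nabla g/g - x|^2 g\,d\gamma = I(g) + 2n - \int|x|^2 g\,d\gamma$ then yields the identity. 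This step is where the bookkeeping is most delicate, but reduces to straightforward algebra once (\ref{BochGauss}), the Monge--Amp\`ere equation, and Gaussian IBP are assembled. Substituting into the previous display finishes the proof, with equality for $g = \lambda^{n/2}e^{(1-\lambda)|x|^2/2}$ (where $I+D^2\varphi = \sqrt{\lambda}\,I$ and $\nabla\log\det(I+D^2\varphi) \equiv 0$, so both Jensen steps and the subadditivity step are saturated).
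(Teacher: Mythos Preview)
Your argument has a genuine gap stemming from a misreading of the last term in (\ref{BochGauss}). The expression ${\rm Tr}\bigl[(I+D^2\varphi)^{-1}D^2\varphi_{x_i}\bigr]^2$ denotes ${\rm Tr}\bigl(M_i^2\bigr)$ with $M_i=(I+D^2\varphi)^{-1}D^2\varphi_{x_i}$, \emph{not} $({\rm Tr}\,M_i)^2$; the paper makes this explicit a few lines later by rewriting the term as the Hilbert--Schmidt norm $\|(D^2\Phi)^{-1/2}D^2\Phi_{x_i}(D^2\Phi)^{-1/2}\|^2$. Consequently your ``compact form'' of (\ref{BochGauss}) is wrong, and so is your claimed identity $\int(\|I+D^2\varphi\|^2 + |\nabla\log\det(I+D^2\varphi)|^2)\,g\,d\gamma = \int|\nabla g/g - x|^2 g\,d\gamma$: the two quantities $\sum_i{\rm Tr}(M_i^2)$ and $\sum_i({\rm Tr}\,M_i)^2=|\nabla\log\det(I+D^2\varphi)|^2$ are not comparable in general, and your Monge--Amp\`ere/IBP derivation of the identity is circular because it already uses the incorrect compact form.

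That said, the \emph{structure} of your proof is sound and is essentially a reorganization of the paper's argument. If you replace $|\nabla\log\det(I+D^2\varphi)|^2$ everywhere by the correct third--order term $\sum_i{\rm Tr}(M_i^2)$, your Jensen step on $\sum_i\Delta(\mu_i^2-1)$, your subadditivity step $\Delta(a)+b\ge\Delta(a+b)$, and the final Jensen over $g\,d\gamma$ all go through verbatim; the needed identity at the end then becomes exactly (\ref{0701}), which the paper quotes from \cite{Kol}. The paper's own route is slightly more direct: it first combines (\ref{BochGauss}) with (\ref{0701}) to obtain $I(g)-2\,{\rm Ent}\,g = \int(|\nabla g/g - x|^2 - n - \log\det(D^2\Phi)^2)\,g\,d\gamma$, and then applies Jensen only to the $\log\det$ part, avoiding your subadditivity detour.
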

\begin{proof}
Rewrite (\ref{BochGauss}) in the following way:
\begin{align*} I(g) &=   2 \mbox{\rm Ent}_{\gamma} g
+
 \int \Bigl( \|D^2 \Phi \|^2 - n - \log \det (D^2 \Phi)^2 \Bigr) \  g d \gamma
\\& +
\sum_{i=1}^{n} \bigl\| (D^2 \Phi)^{-\frac{1}{2}} D^2 \Phi_{x_i}  (D^2 \Phi)^{-\frac{1}{2}} \bigr\|^2 \ g d \gamma.
\end{align*}
By another result of \cite[Section 5]{Kol} we have
\begin{equation}
\label{0701}
\int \Bigl| \frac{\nabla g}{g} - x \Bigr|^2 g \ d \gamma = \int \| D^2 \Phi\|^2 g d  \gamma
+ \sum_{i}
\int \bigl\| (D^2 \Phi)^{-\frac{1}{2}} D^2 \Phi_{x_i}  (D^2 \Phi)^{-\frac{1}{2}} \bigr\|^2 \ g d\gamma.
\end{equation}
These two identities imply that
$$
\int \Bigl| \frac{\nabla g}{g} \Bigr|^2 g \ d \gamma=   2 \mbox{\rm Ent}_{\gamma} g
+
 \int \Bigl( \Bigl| \frac{\nabla g}{g} - x \Bigr|^2  - n - \log \det (D^2 \Phi)^2 \Bigr) \  g d \gamma.
$$
Using Jensen's inequality and
(\ref{0701}) we obtain
$$
- \int \log \det (D^2 \Phi)^2  \  g d \gamma
\ge -n \log \int  \frac{\|D^2 \Phi\|^2}{n}  \  g d \gamma
\ge - n \log \int  \frac{1}{n} \Bigl| \frac{\nabla g}{g}-x \Bigr|^2  \  g d \gamma.
$$
Hence
$$
\int \Bigl| \frac{\nabla g}{g} \Bigr|^2 g \ d \gamma \ge 2 \mbox{\rm Ent}_{\gamma} g
+
 \int \Bigl( \Bigl| \frac{\nabla g}{g} - x \Bigr|^2  - n \Bigr) \  g d \gamma
-n \log \int  \frac{1}{n} \Bigl| \frac{\nabla g}{g}-x \Bigr|^2  \  g d \gamma,
$$
which completes the proof.
\end{proof}

Certain stability estimates can be obtained
under (one-sided) uniform bounds
on the Hessian of the logarithmic potential
$-\log g$. The proof is based on the Caffarelli
contraction theorem  (see \cite{Kol-contr} and the references therein,
some new developments for higher order derivatives  can be found in \cite{KK}).

\begin{proposition}
Assume that
$$\int x g d \gamma =0$$
and
$${\rm Id} - D^2 \log g \ge \varepsilon \cdot{\rm Id}$$
for some constant $\varepsilon>0$. Then there exists a universal constant $c$  such that
$$
{\rm Ent} g \ge \Bigl(\frac{1}{2} + c \sqrt{\varepsilon} \Bigr) W^2_2(\gamma, g \cdot \gamma).
$$
\end{proposition}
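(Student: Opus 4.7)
The plan is to combine Caffarelli's contraction theorem, the transport identity (\ref{ent-iden}), and an improved pointwise lower bound on $\Delta(t) := t - \log(1+t)$. The starting point is that the Lebesgue density of $g\cdot\gamma$ is proportional to $e^{-V}$ with $V = -\log g + |x|^2/2$, so the hypothesis ${\rm Id} - D^2\log g \ge \varepsilon\,{\rm Id}$ is exactly $D^2 V \ge \varepsilon\,{\rm Id}$. Caffarelli's theorem then guarantees that the optimal transportation $S(x) = x + \nabla\psi(x)$ pushing $\gamma$ forward to $g\cdot\gamma$ is $\varepsilon^{-1/2}$-Lipschitz, so the eigenvalues $\lambda_1(x),\ldots,\lambda_n(x)$ of $D^2\psi(x)$ all lie in $[-1,\,L-1]$ with $L := \varepsilon^{-1/2}$.

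Next, apply the identity (\ref{ent-iden}) with the densities $g \equiv 1$ and $f = g$ (and $u = \psi$). Since $\nabla 1 = 0$ and ${\rm Ent}\,1 = 0$, one obtains
$$
{\rm Ent}\, g \;=\; \tfrac{1}{2} W_2^2(\gamma, g\cdot\gamma) + \int \sum_{i=1}^n \Delta(\lambda_i)\, d\gamma.
$$
The key analytic ingredient is the uniform pointwise bound $\Delta(t) \ge c_0 L^{-1} t^2 = c_0 \sqrt{\varepsilon}\, t^2$ valid on the entire interval $[-1, L-1]$ for some universal $c_0 > 0$. To prove it, split the interval: on $[-1,0]$ the inequality $\Delta''(s) = (1+s)^{-2} \ge 1$ integrates to $\Delta(t) \ge t^2/2$, while on $[0, L-1]$ a short computation shows that $t \mapsto \Delta(t)/t^2$ is decreasing, hence bounded below by $\Delta(L-1)/(L-1)^2$, which is of order $1/L$. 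Integrating,
$$
\int \sum_i \Delta(\lambda_i)\, d\gamma \;\ge\; c_0 \sqrt{\varepsilon} \int \|D^2 \psi\|^2\, d\gamma.
$$

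Finally, the Gaussian Poincar\'e inequality applied coordinatewise to each $\psi_{x_i}$, together with the centering hypothesis $\int x\, g\, d\gamma = 0$, gives
$$
\int \psi_{x_i}\, d\gamma \;=\; \int \bigl(S_i(x) - x_i\bigr)\, d\gamma \;=\; \int y_i\, g(y)\, d\gamma \;=\; 0,
$$
and hence $\int \|D^2\psi\|^2\, d\gamma \ge \int |\nabla\psi|^2\, d\gamma = W_2^2(\gamma, g\cdot\gamma)$. Assembling the three displays produces ${\rm Ent}\,g \ge \bigl(\tfrac12 + c_0 \sqrt{\varepsilon}\bigr)\, W_2^2(\gamma, g\cdot\gamma)$, which is the claimed inequality. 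The only step that genuinely requires work is the uniform pointwise estimate on $\Delta$ across the full interval $[-1, L-1]$ with a constant independent of $L$: the blow-up of $\Delta$ at $t\to -1^+$ and its linear growth near $t=L-1$ both have to be controlled on the same footing. Everything else is a direct assembly of Caffarelli's theorem, the identity (\ref{ent-iden}), and Gaussian Poincar\'e.
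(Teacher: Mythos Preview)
Your proof is essentially identical to the paper's: Caffarelli's contraction theorem to bound $I+D^2\psi \le \varepsilon^{-1/2}\,{\rm Id}$, the transport identity (\ref{ent-iden}) with source $\gamma$, the pointwise estimate $\Delta(\lambda_i) \ge c\sqrt{\varepsilon}\,\lambda_i^2$ on the eigenvalue range, and the Gaussian Poincar\'e inequality applied to each $\psi_{x_i}$ using the centering hypothesis. The paper merely asserts the pointwise bound $\Delta\psi - \log\det(I+D^2\psi) \ge c\sqrt{\varepsilon}\,\|D^2\psi\|^2$ without justification, whereas you supply a correct argument for it via the monotonicity of $\Delta(t)/t^2$; note only that your splitting $[-1,L-1]=[-1,0]\cup[0,L-1]$ tacitly assumes $L\ge 1$, i.e.\ $\varepsilon\le 1$, which is also implicit in the paper's proof and is in fact necessary for the statement to hold with a universal constant.
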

\begin{proof}
Let $S(x) = x+\nabla\psi$ be the optimal transportation taking $\gamma$ to $g\cdot\gamma$.
According to the Caffarelli contraction theorem
$$
I + D^2 \psi \le \frac{1}{\sqrt{\varepsilon}}.
$$
Hence
$
\Delta \psi - \log \det D^2 (I + D^2 \psi) \ge c \sqrt{\varepsilon} \| D^2 \psi\|^2.
$
Then it follows from (\ref{ent-iden}) that
$$
{\rm Ent}g\ge \frac{1}{2} W^2_2(\gamma, g \cdot \gamma) + c \sqrt{\varepsilon}\int \| D^2 \psi\|^2 d \gamma.
$$
By the Gaussian Poincar{\'e} inequality
$$
\int \| D^2 \psi\|^2 d \gamma = \sum_{i=1}^n \int |\nabla \psi_{x_i}|^2  d \gamma
\ge  \sum_{i=1}^n \int  \psi^2_{x_i}  d \gamma =
 W^2(g\cdot \gamma, \gamma),
$$
which completes the proof.
\end{proof}

We end this subsection with an extension of Theorem \ref{FILest}
under the stronger assumption that $g \cdot \gamma$ satisfies
the log-Sobolev inequality.
Roughly speaking, the log-Sobolev deficit can be estimated from below
by
$$
\frac{1}{C_{LSI}} K_{a_\nu},
$$
where $C_{LSI}$ is the constant in the log-Sobolev inequality and $K_{a_\nu}$
is  the minimum of the Kantorovich functional
with a cost finction $c$ satisfying
$c(x) \sim x^2 \log x^2$ for large values of $x$ and $c(W_2(\nu,\gamma))=0$.

\begin{theorem}
Assume that  $\nu = g\cdot\gamma$ satisfies the logarithmic Sobolev inequality
$$
\int f^2\log f^2\, d\nu - \int f^2 d \nu \cdot \log \Bigl( \int f^2 d \nu \Bigr)  \le C_{LSI} \int|\nabla f|^2\, d\nu.
$$
Then
$$
\frac{1}{2}I(g) - {\rm Ent g}  \ge \frac{1}{ C_{LSI}} K_{a_\nu}(\nu, \gamma),
$$
where $a_\nu = W_2(\nu, \gamma)$ and $K_a(\nu, \gamma)$  is the minimum of the Kantorovich functional corresponding to the cost function
$$
c_a(x) = a^2\Bigl(1-\frac{|x|^2}{a^2}+\frac{|x|^2}{a^2}\log\frac{|x|^2}{a^2}\Bigr).
$$
\end{theorem}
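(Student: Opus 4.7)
The plan is to upper bound $K_{a_\nu}(\nu,\gamma)$ by the $c_{a_\nu}$-cost of the $W_2$-optimal coupling, perform an algebraic simplification that turns this cost into a log-Sobolev entropy, apply the LSI hypothesis, and close the loop via the Bochner identity~(\ref{BochGauss}). First, I would let $T(x)=x+\nabla\varphi(x)$ be the optimal transportation pushing $\nu=g\cdot\gamma$ onto $\gamma$, so that $(x,T(x))$ under $\nu$ is a coupling of $(\nu,\gamma)$; in particular
$$
K_{a_\nu}(\nu,\gamma)\le\int c_{a_\nu}(\nabla\varphi)\,g\,d\gamma.
$$

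The key algebraic observation is that $c_{a_\nu}(\nabla\varphi)=a_\nu^2-|\nabla\varphi|^2+|\nabla\varphi|^2\log(|\nabla\varphi|^2/a_\nu^2)$, and the first two summands cancel upon integration against $g\,d\gamma$ because $\int|\nabla\varphi|^2\,g\,d\gamma=W_2^2(\nu,\gamma)=a_\nu^2$. This leaves
$$
\int c_{a_\nu}(\nabla\varphi)\,g\,d\gamma=\int|\nabla\varphi|^2\log\bigl(|\nabla\varphi|^2/a_\nu^2\bigr)\,g\,d\gamma,
$$
which, using again $\int|\nabla\varphi|^2\,d\nu=a_\nu^2$, is exactly the left-hand side of the LSI hypothesis applied to $f=|\nabla\varphi|$. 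Invoking LSI then gives
$$
\int c_{a_\nu}(\nabla\varphi)\,g\,d\gamma\le C_{LSI}\int\bigl|\nabla|\nabla\varphi|\bigr|^2\,d\nu=C_{LSI}\int\frac{|D^2\varphi\cdot\nabla\varphi|^2}{|\nabla\varphi|^2}\,g\,d\gamma.
$$

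Finally, to control the Hessian integral by the log-Sobolev deficit, I would use the pointwise bound $|D^2\varphi\cdot\nabla\varphi|^2/|\nabla\varphi|^2\le\|D^2\varphi\|^2$ and drop the two non-negative correction terms in~(\ref{BochGauss}), yielding $\int\|D^2\varphi\|^2\,g\,d\gamma\le I(g)-2\,{\rm Ent}\,g$; the chain then closes. The main obstacle is matching the sharp constant: this naive chain produces $K_{a_\nu}(\nu,\gamma)\le 2C_{LSI}(\tfrac12 I(g)-{\rm Ent}\,g)$, a factor of $2$ off from the claimed $\frac{1}{C_{LSI}}K_{a_\nu}$. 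Removing this factor should be possible by refining one of the two pointwise losses — either a direction-aware sharpening of $|D^2\varphi\cdot\nabla\varphi|^2\le\|D^2\varphi\|^2|\nabla\varphi|^2$, or by retaining the dropped $\Delta\varphi-\log\det(I+D^2\varphi)$ term in~(\ref{BochGauss}) and absorbing it into the LSI estimate through a Young-type splitting, in analogy with the optimization over the scalar $s$ carried out in the Poincar\'e proof of the preceding remark.
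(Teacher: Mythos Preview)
Your argument is correct and lands on the same inequality the paper's proof does. The only difference is where the logarithmic Sobolev inequality is invoked: the paper applies LSI \emph{coordinatewise} to each $f_j=\varphi_{x_j}$, obtaining
\[
C_{LSI}\int\|D^2\varphi\|_{\rm HS}^2\,d\nu=\sum_j C_{LSI}\int|\nabla\varphi_{x_j}|^2\,d\nu\ge\sum_j\int\varphi_{x_j}^2\log\frac{\varphi_{x_j}^2}{\int\varphi_{x_j}^2\,d\nu}\,d\nu,
\]
and then uses the convexity of $t\mapsto t\log t$ (a weighted Jensen with weights $\alpha_j=\int\varphi_{x_j}^2\,d\nu/W_2^2$) to collapse the sum to $\int|\nabla\varphi|^2\log(|\nabla\varphi|^2/a_\nu^2)\,d\nu$. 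You instead apply LSI once to $f=|\nabla\varphi|$ and then use the Kato-type pointwise bound $|\nabla|\nabla\varphi||^2\le\|D^2\varphi\|_{\rm HS}^2$. Both routes give exactly $C_{LSI}\int\|D^2\varphi\|_{\rm HS}^2\,d\nu\ge K_{a_\nu}(\nu,\gamma)$, and both then feed this into (\ref{BochGauss}) in the same way.

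As for the factor of $2$ you flag: there is no hidden refinement in the paper. The paper's chain is precisely $I(g)-2\,{\rm Ent}\,g\ge\int\|D^2\varphi\|_{\rm HS}^2\,d\nu\ge\frac{1}{C_{LSI}}K_{a_\nu}(\nu,\gamma)$, i.e.\ $\tfrac12 I(g)-{\rm Ent}\,g\ge\frac{1}{2C_{LSI}}K_{a_\nu}(\nu,\gamma)$, the same bound your argument yields. The discrepancy with the constant displayed in the theorem statement is a normalization slip in the paper, not a gap in your method; no ``direction-aware sharpening'' or retention of the $\Delta\varphi-\log\det(I+D^2\varphi)$ term is needed (or used) to reach what the paper actually proves.
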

\begin{proof} Let $T(x) = x + \nabla\varphi$ be
the optimal transportation taking $\nu$ to $\gamma$.
We apply formula (\ref{BochGauss})
and estimate the integral of $\|D^2\varphi\|^2_{\rm HS}$ from below:
\begin{multline*}
C_{LSI} \int \|D^2\varphi\|^2_{\rm HS}\, d\nu =
\sum_{j=1}^n C_{LSI}\int |\nabla\varphi_{x_j}|^2\, d\nu
\ge
\sum_{j=1}^n \int \varphi_{x_j}^2 \log \Bigl( \frac{\varphi_{x_j}^2}{\int \varphi^2_{x_j} d \nu} \Bigr)\, d\nu
\\
=
W_2^2(\nu, \gamma)
\int \sum_{j=1}^n \alpha_j
 \frac{\varphi_{x_j}^2}{\int \varphi^2_{x_j} d \nu} \log\Bigl( \frac{\varphi_{x_j}^2}{\int \varphi^2_{x_j} d \nu} \Bigr)\, d\nu,
\end{multline*}
where $\alpha_j = \frac{\int \varphi^2_{x_j} d \nu}{W_2^2(\nu, \gamma)}$.
The function  $t\mapsto t\log t$ is convex for $t>0$ and $\sum_{j=1} \alpha_j=1$.
Hence the above expression is not less than
\begin{multline*}
W_2^2(\nu, \gamma)\int \frac{\sum_{j=1}^n \varphi_{x_j}^2}{W_2^2(\nu, \gamma)}
\log \Bigl(\frac{\sum_{j=1}^n \varphi_{x_j}^2}{W_2^2(\nu, \gamma)}\Bigr)\, d\nu
\\
=
\int |\nabla\varphi|^2
\biggl[\log \Bigl(\frac{|\nabla\varphi|^2}{W_2^2(\nu, \gamma)}\Bigr)-1 + \frac{W_2^2(\nu, \gamma)}{|\nabla\varphi|^2}\biggr]\, d\nu
\ge
K_{a_\mu}(\nu, \gamma),
\end{multline*}
which completes the proof.
\end{proof}

\subsection{Stability for  the Talagrand transportation inequality}

The aim of the following proposition is to give a simplified proof
of another result from \cite[Theorem 5]{FIL} with a more precise constant.

\begin{lemma}
The function
$\Delta(t) = t - \log(1+t)$, $t >-1$ has the following properties:
\begin{enumerate}
\item $\Delta(t)$  is convex,
\item $\Delta(\sqrt{t})$ is concave on $[0,+\infty)$ and, in particular, subadditive,
\item $\Delta(t)\ge\Delta(|t|)$,
\item $\Delta(t)\ge(1-\log2)\min(t,t^2)$ on  $[0,+\infty)$.
\end{enumerate}
\end{lemma}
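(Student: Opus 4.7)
The plan is to verify the four properties by routine one-variable calculus; each is self-contained, and the only thing to watch out for is the precise choice of constant in~(4).

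For (1), the second derivative $\Delta''(t) = 1/(1+t)^2$ is positive on $(-1,\infty)$, which gives convexity directly. For (2), set $\phi(t) = \Delta(\sqrt{t})$ on $[0,\infty)$. A direct computation gives
\[
\phi'(t) = \frac{1}{2(1+\sqrt{t})}, \qquad \phi''(t) = -\frac{1}{4\sqrt{t}(1+\sqrt{t})^2} < 0,
\]
so $\phi$ is concave on $(0,\infty)$. Subadditivity is then the standard consequence of concavity together with $\phi(0)=0$: for $a,b>0$, write $a = \tfrac{a}{a+b}(a+b) + \tfrac{b}{a+b}\cdot 0$, apply concavity to bound $\phi(a)$ from below by $\tfrac{a}{a+b}\phi(a+b)$, do the same for $\phi(b)$, and add.

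For (3), the inequality is an equality when $t\ge 0$, so the content is the case $t\in(-1,0)$. Setting $u=-t\in(0,1)$, the claim $\Delta(t)\ge\Delta(-t)$ rearranges to $\log\tfrac{1+u}{1-u}\ge 2u$, which follows at once from the power series $\log\tfrac{1+u}{1-u} = 2\sum_{k\ge 0}\tfrac{u^{2k+1}}{2k+1}$.

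For (4), the key observation is that $\Delta(1) = 1-\log 2$, so the constant $1-\log 2$ is precisely the one for which equality holds at $t=1$; this explains the case split into $t\ge 1$ and $0\le t\le 1$. On $[1,\infty)$, the function $f(t) = \Delta(t) - (1-\log 2)t = (\log 2)\,t - \log(1+t)$ satisfies $f(1)=0$ and $f'(t) = \log 2 - 1/(1+t) \ge \log 2 - 1/2 > 0$, so $f\ge 0$. On $[0,1]$, I would set $g(t) = \Delta(t) - (1-\log 2)t^2$, note that $g(0)=g(1)=0$, and compute
\[
g'(t) = t\Bigl[\frac{1}{1+t} - 2(1-\log 2)\Bigr],
\]
which changes sign exactly once on $(0,1)$, from $+$ to $-$, because $\frac{1}{1+t}$ is strictly decreasing from $1$ to $\tfrac12$ while $2(1-\log 2)\in(\tfrac12,1)$. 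Hence $g$ rises from $0$, attains a positive maximum, then falls back to $0$, forcing $g\ge 0$ on $[0,1]$. No step here looks like a real obstacle; the only item that requires any thought is recognizing $1-\log 2$ as the equality constant at $t=1$ before setting up the case split.
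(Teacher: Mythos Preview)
Your proof is correct in all four parts. The paper itself states this lemma without proof, treating the properties as elementary facts about the function $\Delta$; your argument supplies exactly the routine calculus verification that the paper omits, and every step checks out (in particular the identification of $1-\log 2=\Delta(1)$ as the equality constant at $t=1$ and the sign analysis of $g'$ on $[0,1]$ are both accurate).
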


\begin{proposition}
Assume that $$\int x g d \gamma=0.$$
The deficit ${\rm Ent}g- \tfrac{1}{2} W_2^2(g\cdot\gamma, \gamma)$
of the Talagrand transportation inequality satisfies the following estimate:
\begin{multline*}
{\rm Ent}g- \tfrac{1}{2} W_2^2(g\cdot\gamma, \gamma)
\ge
\Delta\bigl(\tfrac{1}{2}n^{-1/2}W_{1,1}(g\cdot\gamma, \gamma)\bigr)
\\
\ge
(1-\log2)\min\bigl\{\tfrac{1}{2}n^{-1/2}W_{1,1}(g\cdot\gamma, \gamma), \tfrac{1}{4}n^{-1}W^2_{1,1}(g\cdot\gamma, \gamma)\bigr\}
\end{multline*}
where $W_{1,1}$  is the transportation cost corresponding to $c(x,y) = \sum_{i=1}^n |x_i-y_i|$.
\end{proposition}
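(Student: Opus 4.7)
The plan is to convert the Talagrand deficit into an exact integral identity via the Brenier transport, reduce the integrand to a single scalar through the properties of $\Delta$ recorded in the preceding lemma, and finally estimate that scalar by $W_{1,1}$ using a Gaussian $L^1$-Poincar\'e bound combined with a coupling argument.

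First, let $T(x) = x + \nabla\varphi(x)$ denote the optimal transportation taking $\gamma$ to $g\cdot\gamma$. Specializing identity (\ref{ent-iden}) to the case in which the source density is $1$ (so that $\nabla g$-term vanishes and ${\rm Ent}\,1 = 0$) yields
$$
{\rm Ent}\,g - \tfrac{1}{2}W_2^2(g\cdot\gamma,\gamma)
=
\int\bigl[\Delta\varphi - \log\det(I + D^2\varphi)\bigr]\,d\gamma
=
\int\sum_{i=1}^n \Delta(\lambda_i)\,d\gamma,
$$
where $\Delta(t) = t - \log(1+t)$ and the $\lambda_i = \lambda_i(D^2\varphi(x))$ are the eigenvalues of the Hessian. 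Property (3) of the preceding lemma gives $\sum_i\Delta(\lambda_i)\ge\sum_i\Delta(|\lambda_i|)$, and the subadditivity that follows from property (2) (concavity of $\Delta(\sqrt{\cdot})$ together with $\Delta(0)=0$) furnishes
$$
\sum_i\Delta(|\lambda_i|) = \sum_i\Delta\bigl(\sqrt{\lambda_i^2}\bigr)\ge \Delta\Bigl(\sqrt{\textstyle\sum_i\lambda_i^2}\Bigr) = \Delta(\|D^2\varphi\|).
$$
Since $\Delta$ is convex (property 1) and increasing on $[0,\infty)$, Jensen's inequality then produces
$$
{\rm Ent}\,g - \tfrac{1}{2}W_2^2(g\cdot\gamma,\gamma) \ge \int\Delta(\|D^2\varphi\|)\,d\gamma \ge \Delta\Bigl(\int\|D^2\varphi\|\,d\gamma\Bigr).
$$

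The heart of the argument is then to show $\int\|D^2\varphi\|\,d\gamma \ge \tfrac{1}{2}n^{-1/2}W_{1,1}(g\cdot\gamma,\gamma)$, which I would obtain by chaining three short steps. By Cauchy--Schwarz applied to the vector $(|\nabla\partial_i\varphi|)_{i=1}^n \in \RR^n$,
$$
\|D^2\varphi\| = \Bigl(\sum_{i=1}^n|\nabla\partial_i\varphi|^2\Bigr)^{1/2} \ge \frac{1}{\sqrt{n}}\sum_{i=1}^n |\nabla\partial_i\varphi|.
$$
The centering hypothesis $\int xg\,d\gamma = 0$ forces $\int\partial_i\varphi\,d\gamma = \int(T_i - x_i)\,d\gamma = \int y_i g\,d\gamma = 0$, so by the Gaussian $L^1$-Poincar\'e inequality (taken with the non-sharp but convenient constant $\tfrac{1}{2}$, dominated by the sharp constant $\sqrt{2/\pi}$)
$$
\int|\nabla\partial_i\varphi|\,d\gamma \ge \tfrac{1}{2}\int|\partial_i\varphi|\,d\gamma.
$$
Finally, the pairing $(x,T(x))$ is an admissible coupling of $\gamma$ and $g\cdot\gamma$, hence
$$
W_{1,1}(g\cdot\gamma,\gamma) \le \int\sum_{i=1}^n|T_i(x) - x_i|\,d\gamma = \int\sum_{i=1}^n|\partial_i\varphi|\,d\gamma.
$$
Combining the three inequalities gives the desired lower bound on $\int\|D^2\varphi\|\,d\gamma$, and the first inequality of the statement follows.

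The second inequality is then a direct application of property (4) of the preceding lemma at $s = \tfrac{1}{2}n^{-1/2}W_{1,1}$. The only technical subtlety is the $L^1$-Poincar\'e step: the constant $\tfrac{1}{2}$ is non-sharp but entirely sufficient, and the Sobolev regularity of $\varphi$ required to make the initial transport identity rigorous is exactly that afforded by Assumption~I, as explained in the earlier remark on the regularity of $\varphi$.
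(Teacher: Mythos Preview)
Your argument is correct and follows essentially the same route as the paper's proof: the transport identity (\ref{ent-iden}) with source $\gamma$, the reduction via properties (3), (2), and (1) of $\Delta$ to $\Delta\bigl(\int\|D^2\varphi\|\,d\gamma\bigr)$, and then the chain Cauchy--Schwarz $\to$ Gaussian $L^1$-Poincar\'e (Cheeger) with constant $\tfrac12$ $\to$ coupling bound for $W_{1,1}$, followed by property (4). The only differences are notational (the paper calls the potential $\psi$) and your added remarks on monotonicity of $\Delta$ and on regularity, both of which are appropriate.
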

\begin{proof}
Let $S(x) = x+\nabla\psi$ be the optimal transportation taking  $\gamma$ to $g\cdot\gamma$
and let $\lambda_i$ be all eigenvalues of $D^2 \varphi$.
Applying (\ref{ent-iden}) we obtain
\begin{multline*}
{\rm Ent}g- \tfrac{1}{2} W_2^2(g\cdot\gamma, \gamma)
=
\int \sum_{i=1}^n \lambda_i - \log(1+\lambda_i)\, d\gamma
\\
=
\int \sum_{i=1}^n \Delta(\lambda_i)\, d\gamma
\ge
\int \sum_{i=1}^n \Delta(|\lambda_i|)\, d\gamma
=
\int \sum_{i=1}^n \Delta\bigl(\sqrt{\lambda_i^2}\bigr)\, d\gamma
\ge
\int \Delta\Bigl(\bigl[\sum_{i=1}^n\lambda_i^2\bigr]^{1/2}\Bigr)\, d\gamma
\\
=
\int \Delta\bigl(\|D^2\psi\|_{\rm HS}\bigr)\, d\gamma
\ge
\Delta\Bigl(\int\|D^2\psi\|_{\rm HS}\, d\gamma\Bigr).
\end{multline*}
Now we note that
\begin{multline*}
\int\|D^2\psi\|_{\rm HS}\, d\gamma
=
\int\bigl(\sum_{i=1}^n |\nabla\psi_{x_i}|^2\bigr)^{1/2}\, d\gamma
\ge
n^{-1/2}\int\sum_{i=1}^n |\nabla\psi_{x_i}|\, d\gamma
\\
\ge
\tfrac{1}{2}n^{-1/2}\int\sum_{i=1}^n |\psi_{x_i}|\, d\gamma
\ge
\tfrac{1}{2}n^{-1/2}W_{1,1}(g\cdot\gamma, \gamma),
\end{multline*}
where we apply
the equality
$$
\int\psi_{x_i} \, d\gamma = \int (x_i+\psi_{x_i}) \, d\gamma = \int x_i g\, d\gamma = 0
$$
and the $L^1$-Poincar{\'e} (Cheeger) inequality for $\gamma$:
$$
\int \bigl|f - \int f d \gamma\bigr| d \gamma \le 2 \int |\nabla f| d \gamma,
$$
which completes the proof.
\end{proof}

\section{A priori estimates for the K{\"a}hler--Einstein equation}

A moment mesure on  $\mathbb{R}^n$ is a probability measure  $\nu$ on $\mathbb{R}^n$
that is the image of
another probability measure $\mu = e^{-\Phi} dx$ under the
mapping $x \to \nabla \Phi(x)$, where $\Phi$ is a convex function.
If  $\nu$  admits a smooth density $\varrho$, then $\Phi$
solves the K{\"a}hler--Einstein equation
$$
\varrho(\nabla \Phi) \det D^2 \Phi = e^{-\Phi}.
$$
It was shown in   \cite{CK} that every measure $\nu$ with zero mean satisfying
the condition $\nu(L)=0$ for any subspace $L$ of dimension
less than $n$ is a moment measure.
The function $\Phi$ is uniquely determined up to a translation.

We will be interested in the following Gaussian analog of the  K{\"a}hler--Einstein equation:
given a probability measure
$$
\varrho \ dx = g \cdot \gamma,
$$
find $\varphi$ such that $g \cdot \gamma$ is the image of  the log-concave probability measure
$$
\rho \cdot \gamma = e^{-\varphi} \cdot \gamma
$$
under the mapping
$$T(x) =  x + \nabla \varphi(x).
$$

Clearly, there is a simple connection
between this problem and the ''Euclidean'' moment measure problem.
Namely, $\Phi$ and $\varphi$ are related by the following formula:
$$
\Phi(x) = \frac{|x|^2}{2} + \varphi(x) + \frac{n}{2} \log 2 \pi.
$$
However, the Gaussian modification of the  moment measure problem
is meaningful in any infinite-dimensional space equipped with a Gaussian measure.

Since $\Phi$ is unique up to a translation, it will be natural
to impose the following requirement that determines $\varphi$ uniquely.

{\bf Assumption II.} The measure $ \rho \cdot \gamma = e^{-\varphi} \cdot \gamma$ satisfies
the condition
$$
\int x_i e^{-\varphi} d \gamma =0 \quad \forall\, i.
$$

The existence and uniqueness of $\varphi$ follows from the results of \cite{CK}.
It follows from the main result of \cite{S} that $e^{-\varphi}$
 gives a minimum to the following functional:
$$
\mathcal{F}_{\gamma}(\rho) = -\frac{1}{2} W^2_2(g \cdot \gamma, \rho \cdot \gamma)  + \int \rho \log \rho \ d \gamma.
$$
We wish to find a condition on $g$ which guarantees  that $F$ is bounded
from below by a dimension-free functional depending on $g$.

\subsection{Information controls $\mathcal{F}$}

Throughout this subsection
$\rho \cdot \gamma = e^{-\varphi} d \gamma$ is the (unique)
minimum point of $\mathcal{F}_{\gamma}$ with zero mean.

\begin{lemma}
\label{distcontrol}
Assume that the measure
$\rho \cdot \gamma = e^{-\varphi} d \gamma$ satisfies
the inequality
$$
{\rm Ent} \rho \le \frac{1-\delta}{2} I(\rho)
$$
for some $0 < \delta <1$.
Then
$$
W_2(\rho \cdot \gamma,  g \cdot \gamma)
\le
\frac{1+ \sqrt{1-\delta}}{\delta} W_2(g \cdot \gamma,  \gamma).
$$
\end{lemma}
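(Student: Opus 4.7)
The plan is to exploit a fortuitous identification: under the conventions of the paper, the Fisher information $I(\rho)$ coincides exactly with $W_2^2(\rho\cdot\gamma, g\cdot\gamma)$. Once this is observed, the hypothesis $\text{Ent}\,\rho\le\frac{1-\delta}{2}I(\rho)$ becomes a quantitative comparison between the entropy of $\rho$ and the transport distance to $g\cdot\gamma$, after which Talagrand's inequality and the triangle inequality close things out.

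More precisely, the first step is to recall that since $\rho\cdot\gamma = e^{-\varphi}\cdot\gamma$ is the minimizer under Assumption II, the map $T(x)=x+\nabla\varphi(x)$ is the gradient of the convex function $\tfrac{|x|^2}{2}+\varphi$ and pushes $\rho\cdot\gamma$ forward to $g\cdot\gamma$. Hence it is the optimal transport, so
\begin{equation*}
W_2^2(\rho\cdot\gamma,\, g\cdot\gamma) \;=\; \int |\nabla\varphi|^2\,\rho\, d\gamma.
\end{equation*}
On the other hand, $\tfrac{\nabla\rho}{\rho}=-\nabla\varphi$, so $\tfrac{|\nabla\rho|^2}{\rho}=\rho|\nabla\varphi|^2$, and therefore $I(\rho)=W_2^2(\rho\cdot\gamma,g\cdot\gamma)$ as well.

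The second step is to invoke the Talagrand transportation inequality (\ref{Tal}) applied to $\rho$ to get $\text{Ent}\,\rho \ge \tfrac{1}{2}W_2^2(\rho\cdot\gamma, \gamma)$, and to combine this with the hypothesis and the identity above:
\begin{equation*}
\tfrac{1}{2}W_2^2(\rho\cdot\gamma, \gamma) \;\le\; \text{Ent}\,\rho \;\le\; \tfrac{1-\delta}{2}\,I(\rho) \;=\; \tfrac{1-\delta}{2}\,W_2^2(\rho\cdot\gamma, g\cdot\gamma).
\end{equation*}
Taking square roots gives $W_2(\rho\cdot\gamma,\gamma)\le\sqrt{1-\delta}\,W_2(\rho\cdot\gamma, g\cdot\gamma)$.

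The final step is purely algebraic: the triangle inequality $W_2(\rho\cdot\gamma, g\cdot\gamma)\le W_2(\rho\cdot\gamma,\gamma)+W_2(\gamma, g\cdot\gamma)$ combined with the previous bound yields $(1-\sqrt{1-\delta})\,W_2(\rho\cdot\gamma, g\cdot\gamma)\le W_2(g\cdot\gamma,\gamma)$, and rationalizing the denominator $1-\sqrt{1-\delta}=\delta/(1+\sqrt{1-\delta})$ produces exactly the claimed factor $(1+\sqrt{1-\delta})/\delta$. There is no real obstacle here: the only substantive point is recognizing the identity $I(\rho)=W_2^2(\rho\cdot\gamma, g\cdot\gamma)$, which is a direct consequence of the fact that $\rho\cdot\gamma$ is a critical point of $\mathcal{F}_\gamma$ and hence its optimal transport to $g\cdot\gamma$ is driven by the logarithmic gradient $-\nabla\log\rho=\nabla\varphi$.
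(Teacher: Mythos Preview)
Your proof is correct and follows essentially the same route as the paper's: the paper also uses the identity $I(\rho)=W_2^2(\rho\cdot\gamma,g\cdot\gamma)$, applies Talagrand's inequality to get $W_2^2(\rho\cdot\gamma,\gamma)\le (1-\delta)W_2^2(\rho\cdot\gamma,g\cdot\gamma)$, and then closes with the triangle inequality. The only difference is that you spell out the justification for $I(\rho)=W_2^2(\rho\cdot\gamma,g\cdot\gamma)$ in more detail, which the paper takes for granted at this point.
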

\begin{proof}
We have
$$
 W^2_2(\rho \cdot \gamma,  \gamma) \le
 2 {\rm Ent}(\rho) \le (1-\delta) I(\rho)
 = (1-\delta) W^2_2(\rho \cdot \gamma,  g \cdot \gamma).
$$
Hence by the triangle inequality
$$
W_2(\rho \cdot \gamma, g \cdot \gamma)  \le W_2(\rho \cdot \gamma, \gamma) + W_2(g \cdot \gamma,  \gamma)
\le \sqrt{1-\delta} W_2(\rho \cdot \gamma, g \cdot \gamma) + W_2(g \cdot \gamma,  \gamma) ,
$$
which completes the proof.
\end{proof}

\begin{theorem}
\label{poin} There exists a pair of universal constants $C_1, C_2$ such that
$$
C_P\le \max\{C_1, \exp(C_2 I(g))\},
$$
where $C_P$ is the Poincar{\'e} constant of the measure $e^{-\varphi} \cdot \gamma$.
\end{theorem}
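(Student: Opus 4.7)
The plan rests on two structural facts. First, the minimiser $\mu=\rho\cdot\gamma=e^{-\varphi}\cdot\gamma$ is log-concave with respect to Lebesgue measure: since $T(x)=x+\nabla\varphi$ is an optimal transport map, $\tfrac12|x|^2+\varphi$ is convex, i.e.\ $D^2\varphi\ge-\mathrm{Id}$. Second, testing the minimising inequality $\mathcal{F}_{\gamma}(\rho)\le\mathcal{F}_{\gamma}(1)=-\tfrac12 W_2^2(g\cdot\gamma,\gamma)$ and using the identity $I(\rho)=\int|\nabla\varphi|^2\,d\mu=W_2^2(\mu,g\cdot\gamma)$ (valid because $\mathrm{Id}+\nabla\varphi$ is the Brenier map from $\mu$ to $g\cdot\gamma$) yields the key information-entropy bound
$$
\mathrm{Ent}\,\rho+\tfrac12 W_2^2(g\cdot\gamma,\gamma)\le\tfrac12 I(\rho).
$$

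The proof then proceeds in three stages. \textbf{Stage one} controls $I(\rho)$ in terms of $I(g)$ and the unknown $C_P$. Assuming a Poincar\'e inequality for $\mu=\rho\cdot\gamma$ with constant $C_P$, Theorem \ref{FILest} applied to $\rho$ upgrades the log-Sobolev inequality to $\mathrm{Ent}\,\rho\le\tfrac12(1-f(C_P))I(\rho)$, where $f(C_P)=\frac{C_P\log C_P-C_P+1}{(C_P-1)^2}$. Lemma \ref{distcontrol} with $\delta=f(C_P)$ then gives
$$
\sqrt{I(\rho)}=W_2(\mu,g\cdot\gamma)\le\frac{1+\sqrt{1-f(C_P)}}{f(C_P)}W_2(g\cdot\gamma,\gamma)\le\frac{2\sqrt{I(g)}}{f(C_P)},
$$
the last step combining Talagrand's inequality with the Gaussian log-Sobolev inequality applied to $g$. \textbf{Stage two} bounds the covariance of $\mu$ dimension-freely. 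For every unit vector $v$, the decomposition $x=T(x)-\nabla\varphi$ and the triangle inequality in $L^2(\mu)$ give
$$
\Bigl(\textstyle\int\langle v,x\rangle^2 d\mu\Bigr)^{1/2}\le\Bigl(\textstyle\int\langle v,y\rangle^2 g\,d\gamma\Bigr)^{1/2}+\sqrt{I(\rho)},
$$
and the first term is bounded by a universal function of $I(g)$ via the entropy inequality $\int\psi g\,d\gamma\le\lambda^{-1}\log\int e^{\lambda\psi}\,d\gamma+\lambda^{-1}\mathrm{Ent}(g)$ applied to $\psi=\langle v,y\rangle^2$ with $\lambda\in(0,\tfrac12)$, using $\int e^{\lambda\langle v,y\rangle^2}\,d\gamma=(1-2\lambda)^{-1/2}$ and $\mathrm{Ent}(g)\le\tfrac12 I(g)$.

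\textbf{Stage three} uses log-concavity of $\mu$ to convert covariance control into a Poincar\'e bound. Invoking a Kannan--Lov\'asz--Simonovits-type estimate $C_P(\mu)\lesssim\|\mathrm{Cov}(\mu)\|_{\mathrm{op}}$ for log-concave measures and substituting the output of stages one and two produces a self-consistent inequality of the shape
$$
C_P\le C(1+I(g))+\frac{C\,I(g)}{f(C_P)^2},
$$
which, after solving with the asymptotic $f(C_P)\sim (\log C_P)/C_P$, unwinds into the advertised exponential upper bound $C_P\le\max\{C_1,\exp(C_2 I(g))\}$.

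The principal obstacle is stage three: a clean dimension-free $C_P\lesssim\|\mathrm{Cov}\|_{\mathrm{op}}$ for log-concave measures is the open KLS conjecture. To avoid a circular appeal, I would exploit the special structure of $\mu$ as the pre-image of $g\cdot\gamma$ under a Brenier map, aiming for a quantitative Caffarelli-type contraction that produces a Lipschitz transport with constant controlled by $I(g)$ between $\mu$ and an auxiliary reference measure with tractable Poincar\'e constant (for instance $g\cdot\gamma$ itself, whose Poincar\'e constant can be bounded through its finite Fisher information). Any softening of stage three — for example, replacing it with a Bobkov--Ledoux exponential-moment Poincar\'e bound combined with Borell's lemma — would lead to the same conclusion provided the moment estimates from stage two are transported cleanly to the requisite exponential form.
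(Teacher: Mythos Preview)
Your Stage~one coincides with the paper's opening moves: the self-improved log-Sobolev deficit from Theorem~\ref{FILest} combined with Lemma~\ref{distcontrol} gives $W_2(\rho\cdot\gamma,g\cdot\gamma)\le\frac{1+\sqrt{1-\delta}}{\delta}W_2(g\cdot\gamma,\gamma)$ with $\delta=f(C_P)$. The divergence from the paper occurs in Stages~two and three, and there are two distinct problems.

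The first you flag yourself: Stage~three invokes $C_P\lesssim\|\mathrm{Cov}(\mu)\|_{\mathrm{op}}$, which is exactly the open KLS conjecture. Your proposed workarounds (a quantitative Caffarelli contraction depending only on $I(g)$, or a Bobkov--Ledoux exponential-moment route) are not carried out, and neither is straightforward: a contraction bound in terms of Fisher information alone is not available in the literature, and the exponential-moment route would need control of $\int e^{\varepsilon|x|}\,d\mu$ with a \emph{fixed} $\varepsilon$, which Stage~two does not supply.

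The second problem is more serious and independent of KLS: even granting $C_P\lesssim\|\mathrm{Cov}\|_{\mathrm{op}}$, your bootstrap does not close. Stage~two yields $\|\mathrm{Cov}(\mu)\|_{\mathrm{op}}\le C(I(g))+C\,I(\rho)$, and Stage~one gives $I(\rho)\le 4I(g)/f(C_P)^2$, so the self-consistent inequality reads
\[
C_P\;\le\;C(I(g))+\frac{C\,I(g)}{f(C_P)^{2}}.
\]
Plugging in $f(C_P)\sim(\log C_P)/C_P$ for large $C_P$ gives $C_P\lesssim I(g)\,C_P^{2}/(\log C_P)^{2}$, i.e.\ $(\log C_P)^{2}/C_P\lesssim I(g)$. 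Since $(\log t)^2/t\to 0$, this is satisfied by \emph{all} sufficiently large $C_P$ and gives no upper bound. The exponent $2$ on $f(C_P)$ in the denominator is fatal; you need exponent $1$.

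The paper achieves exponent $1$ by bypassing covariance entirely. Instead of second moments it controls the \emph{Hessian of the dual potential}: with $x+\nabla\psi$ the optimal map from $g\cdot\gamma$ to $\mu$, the Brascamp--Lieb inequality gives the ``weak'' Poincar\'e
\[
\mathrm{Var}_{\mu}(h)\le\Bigl(\int\|\mathrm{Id}+D^2\psi\|_{\mathrm{op}}\,g\,d\gamma\Bigr)\,\|\nabla h\|_{L^\infty}^2,
\]
and E.~Milman's equivalence theorem for log-concave measures converts this into a genuine Poincar\'e bound $C_P\le c\int\|\mathrm{Id}+D^2\psi\|_{\mathrm{op}}\,g\,d\gamma$. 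The entropy identity (\ref{ent-iden}) then bounds $\int\|D^2\psi\|_{\mathrm{op}}\,g\,d\gamma$ by $C(1+I(g)/\delta)$ with a single power of $1/\delta$, and the resulting inequality $C_P\le C(1+I(g)/\delta)$ does unwind to $\log C_P\lesssim I(g)$.
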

\begin{proof}
Let $x + \nabla \psi$ be the optimal transportation taking  $g \cdot \gamma$
to $e^{-\varphi} \cdot \gamma$. It is well-known that
$$
x + \nabla \psi = T^{-1} \quad \hbox{$g \cdot \gamma$-a.e.}
$$
 and
$$
W^2_2(\rho \cdot \gamma, g \cdot \gamma) =
\int |\nabla \varphi|^2 e^{-\varphi} d \gamma = \int |\nabla \psi|^2 g d \gamma.
$$
First we note that  $e^{-\varphi} \cdot \gamma$
is a log-concave measure, hence
it has finite moments of all orders and a finite Poincar{\'e}
constant $C_P < \infty$  (see \cite[Theorem 4.6.3]{BGL}).

 Note that
$$
I(\rho) = W_2(\rho \cdot \gamma,  g \cdot \gamma)
\le W_2(\rho \cdot \gamma, \gamma) + W_2(g \cdot \gamma, \gamma).
$$
The right-hand side of this inequality is finite, because
$g \cdot \gamma$ and $\rho \cdot \gamma$ have finite second moments.
Thus,  $I(\rho) < \infty$.
 Moreover, approximating $g$
by smooth densities with uniformly  bounded second derivatives of $\log g$
we can assume without loss of generality that
$\nabla \psi$ is globally Lipschitz (see Theorem \ref{sdb}).

It follows from the previous lemma and Theorem \ref{FILest}
that
$$
W_2(\rho \cdot \gamma,  g \cdot \gamma)
\le
\frac{1+ \sqrt{1-\delta}}{\delta}W_2(g \cdot \gamma,  \gamma),
$$
where
$
\delta =  \frac{C_P \log C_P - C_P +1}{(C_P-1)^2}.
$
Applying (\ref{ent-iden}) we obtain
\begin{align*}
\int \bigl( \Delta \psi -  \log \det  \bigl( {\rm Id} + D^2 \psi \bigr) \bigr) g d \gamma  & + \int g \log g d \gamma  +  \frac{1}{2}\int |\nabla \varphi|^2 e^{-\varphi} d \gamma
\\& =  - \int \varphi e^{-\varphi} d\gamma - \int \langle \nabla \psi, \nabla g \rangle d \gamma.
\end{align*}
By the log-Sobolev inequality
$$- \int \varphi e^{-\varphi} d\gamma
\le \frac{1}{2}\int |\nabla \varphi|^2 e^{-\varphi} d \gamma.
$$
Hence
\begin{align}
\label{2908}
\int &
\bigl( \Delta \psi -  \log \det  \bigl( {\rm Id} + D^2 \psi \bigr) \bigr) g d \gamma   \le \sqrt{\int \frac{|\nabla g|^2}{g} d \gamma} \cdot
W_2(g \cdot \gamma, \rho \cdot \gamma)
\nonumber \\&\le \frac{1+ \sqrt{1-\delta}}{\delta}W_2(g \cdot \gamma,  \gamma)  \sqrt{I(g)}
\le \frac{1+ \sqrt{1-\delta}}{\delta} I(g).
\end{align}
Let us estimate $C_P$. By the Brascamb--Lieb inequality (see \cite{BGL})
$$
\int f^2 e^{-\varphi} d \gamma - \Bigl( \int f e^{-\varphi} d \gamma  \Bigr)^2 \le
\int \langle \bigl(  {\rm Id} + D^2 \varphi \bigr)^{-1} \nabla f, \nabla f \rangle e^{-\varphi} d \gamma.
$$
Hence
\begin{align*}
\int f^2 e^{-\varphi} d \gamma - \Bigl( \int f e^{-\varphi} d \gamma  \Bigr)^2
& \le \int \bigl\| ({\rm Id} + D^2 \varphi \bigr)^{-1}\|_{op}  e^{-\varphi} d \gamma \cdot \|\nabla f\|^2_{L^{\infty}(e^{-\varphi} \cdot \gamma)}
\\&
=
 \int  \|{\rm Id} + D^2 \psi \|_{op} g d\gamma \cdot \|\nabla f\|^2_{L^{\infty}(e^{-\varphi} \cdot \gamma)}.
\end{align*}
Since $e^{-\varphi} \cdot \gamma$ is a log-concave measure, it follows from a result
 of E.~Milman on equivalence of the
isoperimetric and concentration inequalities  (\cite{Milman},
\cite[Theorem 1.5]{Milman2} or \cite[Theorem 8.7.1]{BGL}) that
$$
C_P \le c  \int  \|{\rm Id} + D^2 \psi \|_{op} g d \gamma
$$
for some universal constant $c$.
It follows from (\ref{2908}) that
$$
\int \bigl(\|D^2\psi\|_{op} - \log\det (I + \|D^2\psi\|_{op})\bigr) gd\gamma
\le \frac{1+ \sqrt{1-\delta}}{\delta} I(g).
$$
Applying the inequality $\log(1+t)\le 2^{-1} + 2^{-1}t$, we observe that
$$
\int \bigl(\|D^2\psi\|_{op} - \log\det (I + \|D^2\psi\|_{op})\bigr) g d \gamma
\ge -1/2 +1/2 \int \|D^2\psi\|_{op}\, g
d\gamma.
$$
Hence for some universal constant $C$ we have
$$
C_P \le C\Bigl(1 + \frac{1+ \sqrt{1-\delta}}{\delta} I(g) \Bigr).
$$
It remains to note that for large values of $C_P$ one has
$
\delta \sim \frac{\log C_P}{C_P}.
$
This immediately implies the announced bound.
\end{proof}

Finally, Theorem \ref{poin}, Lemma \ref{distcontrol}, and Theorem \ref{FILest}
imply our main result.

 \begin{theorem}
 \label{main}
    Assume that $g$ is a probability density such that
  $
  I(g) < \infty.
  $
   Then there exists a constant $C>0$ depending only on $I(g)$
   such that
   $$
   \mathcal{F}_{\gamma} \ge - C
   $$
   and
   $$
   W^2_2(g \cdot \gamma, \rho \cdot \gamma) \le C,
   $$
   where $\rho \cdot \gamma$ is the minimum point
   of $\mathcal{F}_{\gamma}$ such that $\rho \cdot \gamma$ has zero mean.
   \end{theorem}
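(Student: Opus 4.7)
The plan is to glue together the three previously established ingredients: the Poincar\'e constant bound of Theorem~\ref{poin}, the sharp log-Sobolev stability of Theorem~\ref{FILest}, and the distance control of Lemma~\ref{distcontrol}. First I would record that $\rho\cdot\gamma = e^{-\varphi}\cdot\gamma$ is log-concave with zero mean, so it admits a finite Poincar\'e constant $C_P$. By Theorem~\ref{poin}, $C_P \le \max\{C_1, \exp(C_2 I(g))\}$, so $C_P$ is controlled by a function of $I(g)$ alone.

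Next I apply Theorem~\ref{FILest} to $\rho$: since $\int x\, \rho\, d\gamma = 0$ (Assumption II), we have
$$
\tfrac{1}{2}I(\rho) - \mathrm{Ent}\,\rho \;\ge\; \tfrac{1}{2}\,\delta\, I(\rho),
\qquad
\delta \;=\; \frac{C_P\log C_P - C_P + 1}{(C_P-1)^2}.
$$
Equivalently, $\mathrm{Ent}\,\rho \le \tfrac{1-\delta}{2} I(\rho)$, so the hypothesis of Lemma~\ref{distcontrol} is satisfied, giving
$$
W_2(\rho\cdot\gamma, g\cdot\gamma) \;\le\; \frac{1+\sqrt{1-\delta}}{\delta}\, W_2(g\cdot\gamma, \gamma).
$$
Here I use that $I(\rho)$ equals $W_2^2(\rho\cdot\gamma, g\cdot\gamma)$ because $\nabla\varphi$ is the optimal transport potential for $\rho\cdot\gamma\to g\cdot\gamma$. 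Combining the Talagrand and log-Sobolev inequalities gives $W_2^2(g\cdot\gamma, \gamma)\le 2\,\mathrm{Ent}\,g \le I(g)$, so
$$
W_2^2(\rho\cdot\gamma, g\cdot\gamma) \;\le\; \Bigl(\tfrac{1+\sqrt{1-\delta}}{\delta}\Bigr)^{2} I(g).
$$
Since $\delta$ is a decreasing function of $C_P$ (behaving like $\log C_P/C_P$ for large $C_P$) and $C_P$ itself is bounded by $\exp(C_2 I(g))$, the prefactor is bounded by an explicit function of $I(g)$ only, which establishes the second inequality of the theorem.

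For the lower bound on $\mathcal{F}_\gamma$, I use that $\rho$ is a probability density with respect to $\gamma$, so by Jensen's inequality $\mathrm{Ent}\,\rho\ge 0$; hence
$$
\mathcal{F}_\gamma(\rho) \;=\; -\tfrac{1}{2}W_2^2(g\cdot\gamma,\rho\cdot\gamma) + \mathrm{Ent}\,\rho \;\ge\; -\tfrac{1}{2}W_2^2(g\cdot\gamma,\rho\cdot\gamma) \;\ge\; -C/2.
$$
Since $\rho\cdot\gamma$ minimizes $\mathcal{F}_\gamma$, this yields $\inf \mathcal{F}_\gamma \ge -C/2$, proving the first inequality.

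The only place where quantitative work is truly needed is in tracking how the factor $(1+\sqrt{1-\delta})/\delta$ blows up as $C_P$ grows, and checking that the composition with the exponential bound of Theorem~\ref{poin} still gives a constant that depends only on $I(g)$. This is routine once one uses the asymptotic $\delta \sim \log C_P/C_P$; the main conceptual obstacle has already been absorbed into Theorem~\ref{poin}, whose proof crucially relied on E.~Milman's equivalence between isoperimetry and concentration for log-concave measures.
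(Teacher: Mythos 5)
\begin{remark}
\rm
Your proposal is correct and follows exactly the route the paper intends: the paper's proof of Theorem~\ref{main} is the single sentence that Theorem~\ref{poin}, Lemma~\ref{distcontrol}, and Theorem~\ref{FILest} imply the result, and you have spelled out precisely that chain (Poincar\'e bound via $I(g)$, log-Sobolev deficit giving ${\rm Ent}\,\rho\le\frac{1-\delta}{2}I(\rho)$ with $I(\rho)=W_2^2(\rho\cdot\gamma,g\cdot\gamma)$, distance control, Talagrand plus log-Sobolev for $W_2^2(g\cdot\gamma,\gamma)\le I(g)$, and ${\rm Ent}\,\rho\ge0$ for the lower bound on $\mathcal{F}_\gamma$). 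No gaps.
\end{remark}
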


Yet another result can be obtained under the
uniform bound for the Hessian of $-\log g$ by applying
the same techniques as in the proof of the Caffarelli contraction theorem.
We do not give  the full proof here (see, for instance, \cite{Kol-contr}),
but only explain the main idea.

\begin{theorem}
\label{sdb}
Let $-D^2 \log g \le c \cdot {\rm Id}$, $c > -1$.
Then $C_P \le {1+c}$ and
 $
   \mathcal{F}_{\gamma} \ge - C
   $,
   $
   W^2_2(g \cdot \gamma, \rho \cdot \gamma) \le C,
   $
   for some constant $C$ depending on $c$.
\end{theorem}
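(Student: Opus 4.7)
The plan is to mimic the structure of the proof of Theorem~\ref{main}: obtain an explicit bound on the Poincar\'e constant $C_P$ of the target measure $\rho\cdot\gamma$, and then chain Theorem~\ref{FILest} and Lemma~\ref{distcontrol} together to extract both $\mathcal{F}_\gamma\ge -C$ and $W_2^2(g\cdot\gamma,\rho\cdot\gamma)\le C$. What is new is that the pointwise hypothesis $-D^2\log g\le c\cdot\mathrm{Id}$ should yield the sharp, dimension-free bound $C_P(\rho\cdot\gamma)\le 1+c$ directly, in place of the weaker bound $C_P\le\exp(C_2\, I(g))$ of Theorem~\ref{poin}.

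\textbf{Main step (Caffarelli-type contraction).} Set $W(x)=\varphi(x)+|x|^2/2$ so that $T=\nabla W$ is the optimal transport $\rho\cdot\gamma\to g\cdot\gamma$ and $W$ is convex; put $u=-\log g$. The Gaussian K\"ahler--Einstein equation reads
\[
\log\det D^2 W(x)=u(\nabla W(x))-W(x)+\tfrac12|\nabla W(x)|^2.
\]
Pass to the Legendre conjugate $W^*$; since $T^{-1}=\nabla W^*$ transports $g\cdot\gamma$ back to $\rho\cdot\gamma$, substituting $W(\nabla W^*(y))=\langle\nabla W^*(y),y\rangle-W^*(y)$ and $\det D^2W^*(y)=(\det D^2W(\nabla W^*(y)))^{-1}$ converts the equation into
\[
\log\det D^2 W^*(y)=\langle y,\nabla W^*(y)\rangle-W^*(y)-u(y)-\tfrac12|y|^2.
\]
Fix a unit vector $\xi$, differentiate both sides twice in the direction $\xi$, and evaluate at an interior maximum $y_0$ of $M(y):=W^*_{\xi\xi}(y)$. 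The conditions $\nabla M(y_0)=0$ and $\mathrm{tr}((D^2W^*)^{-1}D^2M(y_0))\le 0$, combined with the PSD identity $\mathrm{tr}((A^{-1}A_\xi)^2)\ge 0$ (where $A=D^2W^*$, $A_\xi=\partial_\xi D^2W^*$), reduce the resulting identity to the pointwise bound
\[
W^*_{\xi\xi}(y_0)\le 1+u_{\xi\xi}(y_0)\le 1+c.
\]
Since $y_0$ was a max, this gives $D^2W^*\le(1+c)\cdot\mathrm{Id}$ globally, and equivalently $D^2W\ge(1+c)^{-1}\cdot\mathrm{Id}$. Thus the potential $\varphi+|x|^2/2$ of the log-concave measure $\rho\cdot\gamma$ is $(1+c)^{-1}$-strongly convex, and the Bakry--\'Emery criterion yields $C_P(\rho\cdot\gamma)\le 1+c$.

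\textbf{Conclusion and main obstacle.} With the Poincar\'e bound in hand, Theorem~\ref{FILest} applied to the centered density $\rho$ (Assumption~II) gives $\mathrm{Ent}\,\rho\le\tfrac{1-\delta}{2}I(\rho)$ with $\delta=\delta(1+c)>0$, and Lemma~\ref{distcontrol} then delivers $W_2(\rho\cdot\gamma,g\cdot\gamma)\le\tfrac{1+\sqrt{1-\delta}}{\delta}W_2(g\cdot\gamma,\gamma)$. The factor $W_2(g\cdot\gamma,\gamma)$ is finite and controlled by $c$ via Talagrand's inequality applied to $g\cdot\gamma$ together with the moment bounds on $g\cdot\gamma$ derivable from the curvature assumption; combined with $\mathrm{Ent}\,\rho\ge 0$ this produces $\mathcal{F}_\gamma(\rho)\ge -\tfrac12 W_2^2(\rho\cdot\gamma,g\cdot\gamma)\ge -C$. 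The hard part is the max-principle step itself: the function $M=W^*_{\xi\xi}$ need not attain its supremum on $\mathbb{R}^n$, so one must first smooth and truncate $g$ while preserving the upper bound $-D^2\log g\le c\cdot\mathrm{Id}$, derive decay estimates forcing the maximum of $M$ to be attained at a finite interior point, and then pass to the limit. This technical part, which mirrors the original Caffarelli contraction argument, is precisely the work carried out in \cite{Kol-contr} and for that reason omitted here.
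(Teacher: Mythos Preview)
Your proposal is correct and follows essentially the same approach as the paper's sketch: both argue via a Caffarelli-type maximum principle applied to the second directional derivative of the Legendre dual potential $W^*=\Psi$, obtaining $D^2\Psi\le(1+c)\,\mathrm{Id}$ and hence $C_P\le 1+c$ by Brascamp--Lieb/Bakry--\'Emery, after which the bounds on $\mathcal{F}_\gamma$ and $W_2^2$ follow from the machinery of Theorem~\ref{FILest} and Lemma~\ref{distcontrol}. You have also correctly identified the same technical obstacle the paper flags (the supremum of $\Psi_{\xi\xi}$ need not be attained, requiring the approximation argument from \cite{Kol-contr}).
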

{\bf Sketch of the proof.} According to the Brascamb--Lieb inequality
$$
\int f^2 d\mu - \Bigl( \int f d \mu\Bigr)^2 \le
\int \langle \bigl(D^2 \varphi + {\rm Id}\bigr)^{-1} \nabla f, \nabla f \rangle
d \mu, \ \mu = \rho \cdot \gamma.
$$
Hence it is sufficient to show that $(D^2 \varphi + {\rm Id} )^{-1} \le (1+ c) {\rm Id} $,
or, equivalently, $D^2 \psi + {\rm Id} \le (1+ c) {\rm Id} $, where $\psi$ is the dual
potential.
This estimate can be obtained by the standard maximum principle and differentiation
of the Monge--Amp{\`e}re equation.
The maximum principle is applied in the situation
$$
1 + \psi_{ee} = \Psi_{ee},\quad
\Psi(x) = \frac{|x|^2}{2} + \psi(x) + c(n),
$$
where $e$ is a fixed unit vector.
Note that $\Psi$ satisfies the Monge--Amp{\`e}re equation
$$
\Phi(\nabla \Psi)  - \log \det D^2 \Psi = \frac{x^2}{2} - \log g  + c'(n),
$$
where $\Phi = \frac{|x|^2}{2} + \varphi$.
Let us differentiate twice the equation
\begin{multline}
\label{ma2}
\langle \nabla \Phi(\nabla \Psi), \nabla \Psi_{ee} \rangle
+ \langle D^2 \Phi(\nabla \Phi) \nabla \Psi_{e}, \nabla \Psi_{e}   \rangle  - {\rm Tr}\bigl[ (D^2 \Psi)^{-1} D^2 \Psi_{ee}  \bigr]
\\
+   {\rm Tr}\bigl[ (D^2 \Psi)^{-1} D^2 \Psi_{e}  \bigr]^2  = 1 - ( \log g )_{ee}.
\end{multline}
At every local maximum point $x_0$ of the function $\Psi_{ee}$ one has
$\nabla \Psi_{ee}=0,  D^2 \Psi_{ee} \le 0$. Note, moreover, that
$$
\langle D^2 \Phi(\nabla \Phi) \nabla \Psi_{e}, \nabla \Psi_{e}   \rangle =
\Psi_{ee}.
$$
From (\ref{ma2}) we obtain
$$
1 + \psi_{ee}=
\Psi_{ee} \le 1+ c,
$$
which completes the proof.

\subsection{High power and exponential integrability}

In this subsection we establish a priori bounds for the entropy- and
information-type integrals
$$
\int |\log \rho|^p \rho d \gamma
$$
and
$$
\int \Bigl| \frac{\nabla \rho}{\rho}\Bigr|^{p}\rho  d \gamma .
$$
 Here again  $\rho \cdot \gamma = e^{-\varphi} d \gamma$ is the (unique)
 minimum point of $\mathcal{F}_{\gamma}$
with zero mean.
Several results of this type have been obtained in
\cite{BoKo} for the standard Monge--Kantorovich problem.
The proofs of the theorems of this subsection follow the ideas from \cite{BoKo},
but here they are simpler because we benefit from the specific properties of our problem.

\begin{theorem}
Assume that $I(g) < \infty$.
Then
$$
\int \varphi^2 e^{-\varphi} d \gamma  = \int (\log \rho)^2 \rho dx < \infty.
$$
Assume, in addition, that $g$ satisfies the Poincar{\'e} inequality with  a constant $C$.
Then for every $p>0$ there exists a
number $c$ depending on $p, I(g), C$ such that
$$\int |\nabla \varphi|^p e^{-\varphi} d \gamma =
\int \Bigl| \frac{\nabla \rho}{\rho}\Bigr|^{p}\rho  d \gamma \le c $$
and
$$
\int |\varphi|^p e^{-\varphi} d \gamma  = \int |\log \rho|^p \rho d \gamma \le c.
$$
\end{theorem}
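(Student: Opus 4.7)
The first assertion is a qualitative finiteness claim that I would deduce from log-concavity of $\rho\cdot\gamma$; the quantitative $L^p$-bounds require combining all previous results of the paper with a Monge--Amp\`ere iteration.

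\textbf{Qualitative finiteness.} The density of $\rho\cdot\gamma$ with respect to Lebesgue measure is proportional to $e^{-\Phi}$ with $\Phi(x)=|x|^2/2+\varphi(x)$ convex (because $D^2\varphi\ge -\mathrm{Id}$). Thus $\rho\cdot\gamma$ is a log-concave probability measure, so by Borell's lemma all polynomial moments of the convex function $\Phi$ and of $|x|$ are finite. Since $\varphi=\Phi-|x|^2/2-\mathrm{const}$, this yields $\int\varphi^2 e^{-\varphi}\,d\gamma<\infty$.

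\textbf{Quantitative $L^p$-bounds.} The starting point is the Gaussian Monge--Amp\`ere equation for the transport $T(x)=x+\nabla\varphi$ from $\rho\cdot\gamma$ to $g\cdot\gamma$,
\begin{equation*}
\varphi(x)=-\log g(T(x))+\langle x,\nabla\varphi\rangle+\tfrac{1}{2}|\nabla\varphi|^2-\log\det(I+D^2\varphi).
\end{equation*}
The essential inputs are: $\int|\nabla\varphi|^2 e^{-\varphi}\,d\gamma=W_2^2(\rho\gamma,g\gamma)\le C(I(g))$ from Theorem \ref{main}; the Poincar\'e constant of $\rho\cdot\gamma$ is bounded by $\max\{C_1,\exp(C_2 I(g))\}$ by Theorem \ref{poin}; and $\rho\cdot\gamma$ is log-concave, so by E.~Milman's equivalence Poincar\'e upgrades to exponential concentration and thus to $L^p$-Poincar\'e inequalities with constants depending only on $p$ and $C_P$.

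For the gradient bound I would use that $|\nabla\varphi(x)|=|\nabla\psi(T(x))|$ for the inverse transport $S=T^{-1}=\mathrm{id}+\nabla\psi$ from $g\gamma$ to $\rho\gamma$, whence $\int|\nabla\varphi|^p e^{-\varphi}\,d\gamma=\int|\nabla\psi|^p g\,d\gamma$. Starting from the dimension-free $L^2$-bound, I would iterate in the spirit of \cite{BoKo}: apply the Poincar\'e inequality for $g\cdot\gamma$ coordinate-wise to $\psi_{x_j}$ and close the loop via bounds on $\int\|D^2\psi\|^2 g\,d\gamma$ coming from the Bochner identity~(\ref{BochGauss}). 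For the potential estimate, split $|\varphi|^p\lesssim |\log g(T)|^p+|\langle x,\nabla\varphi\rangle|^p+|\nabla\varphi|^{2p}+|\log\det(I+D^2\varphi)|^p$ via the above equation and bound each piece: $\int|\log g|^p g\,d\gamma$ via Poincar\'e of $g\cdot\gamma$ together with $I(g)<\infty$; the Cauchy--Schwarz term via the $L^{2p}$-moment of $|x|$ under the log-concave $\rho\cdot\gamma$ (finite by Borell combined with Poincar\'e); the third from the previous step; the fourth via $|\log\det(I+A)|\le\|A\|_{\mathrm{op}}+\tfrac{1}{2}\|A\|^2$ and the $L^p$-bounds on $D^2\varphi$.

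The main obstacle will be producing dimension-free $L^p$-bounds on the Hessian $D^2\psi$ (equivalently $D^2\varphi$): the second-moment estimate $\int\|D^2\psi\|^2 g\,d\gamma$ is not directly delivered by Theorems \ref{main} and \ref{poin}, and obtaining it requires combining (\ref{BochGauss}) with the Brascamp--Lieb inequality for $\rho\cdot\gamma$ (already used in the proof of Theorem \ref{poin}) to produce an a~priori $L^2$-bound for the Hessian. Once this $L^2$ control is in place, the log-concavity of $\rho\cdot\gamma$ together with its quantitative Poincar\'e constant propagates it to every $L^p$ in a dimension-free way, closing the argument.
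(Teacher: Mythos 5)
Your qualitative first step is fine (Borell's lemma for the log-concave measure $e^{-\Phi}dx$ does give $\int\varphi^2 e^{-\varphi}d\gamma<\infty$, though the paper gets it more economically from $\int|\nabla\varphi|^2e^{-\varphi}d\gamma=W_2^2(\rho\cdot\gamma,g\cdot\gamma)<\infty$ together with the Poincar\'e inequality for $\rho\cdot\gamma$ established in Theorem \ref{poin}). The quantitative part, however, has a genuine gap. You base everything on the Monge--Amp\`ere equation, whose right-hand side contains $\log\det(I+D^2\varphi)$, and you yourself identify the resulting need for dimension-free $L^p$ bounds on the Hessian as ``the main obstacle''; but your proposed resolution does not work. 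A Poincar\'e inequality bounds a function by its \emph{gradient}, so upgrading an $L^2$ bound on $\|D^2\psi\|$ to an $L^p$ bound would require control of third derivatives, which you do not have; and log-concavity of $\rho\cdot\gamma$ gives reverse H\"older inequalities only for special classes of functions (seminorms, polynomials of $x$), not for an arbitrary function such as $x\mapsto\|D^2\psi(x)\|$. A second, independent problem is the term $\int|\log g(T)|^p e^{-\varphi}d\gamma=\int|\log g|^p g\,d\gamma$ in your decomposition of $|\varphi|^p$: iterating the Poincar\'e inequality for $g\cdot\gamma$ reduces this to $\int|\nabla\log g|^p g\,d\gamma$, which for $p>2$ is not controlled by $I(g)$ and the Poincar\'e constant alone.

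The missing idea is to work with the \emph{first-order} Kantorovich duality identity
$$
\tfrac{1}{2}|\nabla\varphi|^2+\varphi+\psi(x+\nabla\varphi)=0,
$$
which contains no Hessian and no $\log g$. Multiplying by $|\nabla\varphi|^{2m-1}e^{-\varphi}$ and integrating gives
$$
\int|\nabla\varphi|^{2m+1}e^{-\varphi}d\gamma=-\int\varphi|\nabla\varphi|^{2m-1}e^{-\varphi}d\gamma-\int\psi(x+\nabla\varphi)|\nabla\varphi|^{2m-1}e^{-\varphi}d\gamma,
$$
and Young's inequality reduces the right-hand side to the integrals of $|\nabla\varphi|^{2m}e^{-\varphi}$, $|\varphi|^{2m}e^{-\varphi}$ and $|\psi|^{2m}g$. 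The last one is handled by the Poincar\'e inequality for $g\cdot\gamma$ (reducing it to $\int|\nabla\psi|^{2m}g\,d\gamma=\int|\nabla\varphi|^{2m}e^{-\varphi}d\gamma$ by the change of variables), and the Poincar\'e inequality for $\rho\cdot\gamma$ converts gradient bounds into potential bounds at each stage. This yields a clean induction on integer exponents starting from the $L^2$ base case, with all constants depending only on $p$, $I(g)$ and the Poincar\'e constant of $g\cdot\gamma$, and entirely avoids the Hessian estimates on which your argument founders.
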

\begin{proof}
The assumptions of the theorem imply that $\mu$ satisfies the Poincar{\'e}
inequality (see Theorem \ref{poin}).
Next we note that the $\gamma$-integrability of
$|\nabla \varphi|^p e^{-\varphi}$
implies the $\gamma$-integrability of
$| \varphi|^p e^{-\varphi} d \gamma$.
This follows from the Poincar{\'e} inequality
(see an explanation in \cite[formula (1.3)]{BoKo})
and the $\gamma$-integrability of
$\varphi e^{-\varphi}$, i.e., the existence of ${\rm Ent} (e^{-\varphi})$.

Then our first claim follows immediately
from the finiteness of
$$
\int |\nabla \varphi|^{2} e^{-\varphi} d \gamma  = W_2^2(\rho \cdot \gamma, g \cdot \gamma).
$$

We now proceed by induction and assume that the theorem is proved for $p=2m$.
Let us show how to prove the claim for $2m +1$.
By the Kantorovich duality identity (see \cite{BoKo2012})
$$
\frac{|\nabla \varphi|^2}{2} + \varphi + \psi(x + \nabla \varphi)=0.
$$
Hence
$$
\int |\nabla \varphi|^{2m+1} e^{-\varphi} d \gamma
 = -\int \varphi |\nabla \varphi|^{2m-1} e^{-\varphi} d \gamma
-\int \psi(x + \nabla \varphi) |\nabla \varphi|^{2m-1} e^{-\varphi} d \gamma.
$$
We estimate the right-hand side by
$$
c(m) \Bigl(
\int |\nabla \varphi|^{2m}e^{-\varphi}d \gamma  +  \int |\varphi|^{2m} e^{-\varphi}d \gamma +
\int |\psi(x + \nabla \varphi)|^{2m} e^{-\varphi} d \gamma \Bigr).
$$
The integrals
$$
 \int |\varphi|^{2m} e^{-\varphi}d \gamma,  \ \int |\nabla \varphi|^{2m} e^{-\varphi}d \gamma
$$
are  bounded by a constant depending on $C,m, I(g)$ by the inductive assumption.
Since $g \cdot \gamma$ satisfies the Poincar{\'e} inequality,
it remains to show the integrability of $|\nabla \psi|^{2m} g$.
But the integral of this function against $\gamma$ equals the integral
of $|\nabla \varphi|^{2m} e^{-\varphi}$.
So the claim is proved for $p=2m+1$.
Repeating the arguments we  prove the assertion for $p=2m+2$.
\end{proof}

We close
this section with a result on the
exponential integrability of $|\nabla \varphi|^2$. We apply the infimum-convolution inequality,
which is known to be  another form of the transportation inequality (see \cite{BGL}):
\begin{equation}
\label{infconv}
\int e^{-f} d \gamma \le e^{\int f^* d \gamma},
\end{equation}
where
$$
f^{*}(y) = - \inf_{x} \bigl(f(x) + \frac{1}{2} |x-y|^2\bigr).
$$

The duality identity and
(\ref{infconv}) immediately imply that for every $0 \le \delta \le 1$ one has
\begin{align*}
\int e^{\frac{\delta}{2} |\nabla \varphi|^2} e^{-(1-\delta) \varphi}d \gamma
&= \int e^{{ - \delta \psi(x + \nabla \varphi)}} e^{-\varphi} d \gamma
= \int e^{-\delta \psi} g d \gamma
\\& \le \Bigl( \int e^{-\psi} d \gamma \Bigr)^{\delta}
\Bigl( \int g^{\frac{1}{1-\delta}} d \gamma\Bigr)^{1-\delta}
\le e^{\delta \int \varphi d \gamma}
\Bigl( \int g^{\frac{1}{1-\delta}} d \gamma\Bigr)^{1-\delta}.
\end{align*}
In particular, we obtain
 the following result
(note that unlike all other results in this paper we do not assume  that  $I(g) < \infty$).

\begin{theorem}
\label{exp}
Assume that $g \le C$ and $\varphi\in L^1(\gamma)$. Then
$$\int \exp\Bigl(\frac{1}{2} |\nabla \varphi|^2\Bigr)  d \gamma
\le C \exp\Bigl(\int \varphi d \gamma\Bigr).
$$
\end{theorem}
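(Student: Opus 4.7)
The plan is to continue the chain of inequalities already displayed just before the statement, specializing to the hypothesis $g\le C$ and then passing to the limit $\delta\to 1^{-}$. Recall that for any $\delta\in[0,1)$ we have
$$
\int e^{\frac{\delta}{2}|\nabla\varphi|^2}e^{-(1-\delta)\varphi}\,d\gamma
\;\le\;
e^{\delta\int\varphi\,d\gamma}\Bigl(\int g^{\frac{1}{1-\delta}}\,d\gamma\Bigr)^{1-\delta},
$$
so the task reduces to controlling the last factor.

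The bound $g\le C$ gives $g^{1/(1-\delta)}=g\cdot g^{\delta/(1-\delta)}\le C^{\delta/(1-\delta)}\,g$, and since $g\,d\gamma$ is a probability measure, integration yields
$$
\Bigl(\int g^{\frac{1}{1-\delta}}\,d\gamma\Bigr)^{1-\delta}\le C^{\delta}.
$$
Substituting back, I obtain
$$
\int e^{\frac{\delta}{2}|\nabla\varphi|^2}e^{-(1-\delta)\varphi}\,d\gamma\;\le\;C^{\delta}\,e^{\delta\int\varphi\,d\gamma},
$$
valid for every $\delta\in[0,1)$.

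The final step is to let $\delta\to 1^{-}$. The right-hand side converges to $C\,e^{\int\varphi\,d\gamma}$ (this is where the assumption $\varphi\in L^{1}(\gamma)$ is used). On the left-hand side, the integrand $e^{\frac{\delta}{2}|\nabla\varphi|^2}e^{-(1-\delta)\varphi}$ converges pointwise $\gamma$-a.e.\ to $e^{\frac{1}{2}|\nabla\varphi|^2}$, so Fatou's lemma along any sequence $\delta_n\uparrow 1$ gives
$$
\int e^{\frac{1}{2}|\nabla\varphi|^2}\,d\gamma\;\le\;C\,\exp\Bigl(\int\varphi\,d\gamma\Bigr),
$$
which is the claim.

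The only delicate point is the limit passage: I must make sure the integrand does not wildly oscillate in sign near $\delta=1$. Because $\varphi$ may be unbounded above and below, I cannot use dominated convergence directly, but Fatou's lemma suffices since all quantities are positive and the right-hand side remains finite by $\varphi\in L^1(\gamma)$. No hypothesis on $I(g)$ or on the Poincar\'e constant is needed — the argument rests entirely on the Kantorovich duality identity $\tfrac12|\nabla\varphi|^2+\varphi+\psi\circ T=0$, the infimum-convolution form of Talagrand's inequality, H\"older's inequality in the exponent $\tfrac{1}{\delta},\tfrac{1}{1-\delta}$, and the pointwise bound on $g$.
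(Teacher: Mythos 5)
Your proposal is correct and follows essentially the same route as the paper: the authors state the chain of inequalities for every $0\le\delta\le1$ and obtain the theorem ``in particular'' by taking $\delta=1$ (where the H\"older factor degenerates to $\|g\|_{L^\infty(\gamma)}\le C$, or equivalently one bounds $\int e^{-\psi}g\,d\gamma\le C\int e^{-\psi}\,d\gamma$ directly). Your version with $C^{\delta/(1-\delta)}$, the passage $\delta\to1^-$, and Fatou's lemma is simply a careful justification of that same limiting step.
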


\begin{remark}
\rm
The assumption of integrability of $\varphi$
may seem quite innocent, but it is not. For instance, if $\rho$
vanishes outside a compact set, then the integral of
$\varphi$ is infinite.
On the other hand, if $\varphi$ is defined $\gamma$-a.e., then by the Cheeger inequality
$$
\int |\varphi - med| d \gamma \le 2 \int |\nabla \varphi| d \gamma,
$$
where $med$ is the median of $\varphi$. Then  it follows immediately from Theorem \ref{exp}
that the integral of $\exp\Bigl(\frac{1}{2} |\nabla \varphi|^2\Bigr)$
is bounded by a constant depending on $C, med$.
\end{remark}


\begin{thebibliography}{10}

\bibitem{BGL}
Bakry~D., Gentil~I., Ledoux~M.,
Analysis and geometry of Markov diffusion operators, Springer, 2014.


\bibitem{BB} Berman~R. J., Berndtsson~ B.,
Real Monge-Amp\`ere equations and K\"ahler-Ricci solitons on toric log Fano varieties,
Ann. Fac. Sci. Toulouse Math. (6), 22 (2013), no. 4, 649--711.


\bibitem{BGRS}
Bobkov~S.G., Gozlan~N., Roberto~C., Samson~P.-M.,
Bounds on the deficit in the logarithmic Sobolev inequality, J. Funct. Anal.,
 267 (2014), 4110--4138.

\bibitem{BogGauss}
Bogachev~V.I., Gaussian measures, Amer. Math. Soc., Rhode Island, Providence, 1998.

\bibitem{BoKo}
Bogachev~V.I., Kolesnikov~A.V.,
Integrability of absolutely continuous transformations of measures
and applications to optimal mass transportation,
Theory Probab. Appl., 50 (2006), N 3, 367--385.

\bibitem{BoKo2005}
Bogachev~V.I., Kolesnikov~A.V.,
On the  Monge--Amp\`ere equation in infinite dimensions,
Infin. Dimen. Anal.
Quantum Probab. Related Topics,  8 (2005), N 4, 547--572.

\bibitem{BoKo2011}
Bogachev~V.I., Kolesnikov~A.V.,
Sobolev regularity for the Monge--Ampere equation in the Wiener space,
Kyoto J. Math., 53 (2013), N 4, 713--738.

\bibitem{BoKo2012}
Bogachev~V.I., Kolesnikov~A.V.,
The Monge--Kantorovich problem: achievements, connections, and perspectives,
Russian Math. Surveys, 67 (2012), N 5, 785--890.

\bibitem{Carlen}
 Carlen~E.A., Superadditivity of Fisher's information and
 logarithmic Sobolev inequalities, J. Funct. Anal.,
101 (1991), 194--211.

\bibitem{Cav}
Cavalletti~F., The Monge problem in Wiener space,
Calcul. Var. PDE's, 45 (2012), N 1-2, 101--124.

\bibitem{CK}
Cordero-Erausquin~D.,  Klartag~B.,  Moment measures,
J. Funct. Anal., 268 (2015), 3834--3866.

 \bibitem{CFP}
Courtade~T.A., Fathi~M., Papanjady~A.,
Wasserstein stability of the entropy power inequality
for log-concave densities, arXiv:1610.07969.

\bibitem{FN}
Fang~S., Nolot~V., Sobolev estimates for optimal transport maps
on Gaussian spaces, J. Funct. Anal., 266 (2014), 5045--5084.

\bibitem{FIL}
Fathi~M., Indrei~E., Ledoux~M.,
Quantitative logarithmic Sobolev inequalities
and stability estimates, arXiv:1410.6922.

\bibitem{FU}
Feyel~D., {\"U}st{\"u}nel~A.S.,
Monge--Kantorovich measure transportation
and Monge--Amp{\`e}re equation on Wiener space, Probab. Theory Related
Fields, 128 (2004), 347--385.

\bibitem{Fig}
Figalli~A.,
Quantitative isoperimetric inequalities, with applications
to the stability of liquid drops and crystals,
Concentration, functional inequalities and isoperimetry,
In: Contemp. Math. 545, pp.~77--87.  Amer. Math. Soc., Providence, Rhode Island, 2011.

\bibitem{IM}
Indrei~E., Marcon~D.,
A quantitative log-Sobolev inequality for a two parameter family of functions,
Internat. Math. Research Notices, 2014 (2014), N 20, 5563--5580.

\bibitem{KK} Klartag B., Kolesnikov A.V.,
Remarks on curvature in the transportation metric, Analysis Math.,
43 (2017), N 1, 67--88.

\bibitem{Kol04}
Kolesnikov A.V., Convexity inequalities and optimal transport of infinite-dimensional measures,
 J. Math. Pures Appl. (9), 83  (2004), N 11, 1373--1404.

\bibitem{Kol-contr}
Kolesnikov~A.V., Mass transportation and contractions, MIPT Proc.,  2 (2010), N 4, 90--99.

\bibitem{Kol}
Kolesnikov~A.V.,   On Sobolev regularity of mass transport and
transportation inequalities, Theory Probab. Appl., 57 (2012), N 2, 243--264.

\bibitem{Led}
Ledoux~M., Concentration of measure phenomenon,
Amer. Math. Soc., Rhode Island, Providence, 2001.

\bibitem{LGP}
Ledoux~M., Nourdin~I., Peccati~G.,
A Stein deficit for the logarithmic Sobolev inequality, Sci. China Math.,
60 (2017), N 7, 1163--1180.

\bibitem{Milman2}
Milman~E.,
On the role of convexity in isoperimetry, spectral gap and concentration,
Invent. Math., 177, (2009), N 1, 1--43.

\bibitem{Milman}
Milman~E.,
Isoperimetric and concentration inequalities: Equivalence under curvature lower bound,
Duke Math. J., 154 (2010), N 2, 207--239.

\bibitem{S}
Santambrogio~F.,  Dealing with moment measures via entropy and optimal transport,
J. Funct. Anal., 271 (2016), 418--436.

\bibitem{WangZhu} Wang~X.-J., Zhu~X.,
K\"ahler--Ricci solitons on toric manifolds with positive first Chern
class, Advances in Math., 188 (2004), 87--103.

\end{thebibliography}
\end{document}